\documentclass[11pt,oneside,a4paper]{amsart}

\usepackage[T1]{fontenc} % For font encoding, to allow accents, copy & paste, inequality signs, etc. to all work nicely.
\usepackage[utf8]{inputenc} % To be loaded after fontenc, also for encoding.
\usepackage[english]{babel} % Language and hyphenation.
\usepackage{amsmath} % Lots of maths functionality
\usepackage{mathtools} % Uses amsmath, fixes quirks and adds functionality.
\usepackage{amssymb} % Maths symbols
\usepackage{amsthm} % Maths environments: \begin{proof}, etc.
\usepackage{csquotes} % Quotation environment
\usepackage{tikz,pgf,pgfplots,mathrsfs} \usetikzlibrary{arrows} \pgfplotsset{compat=1.15} % For geogebra usage
\usepackage[a4paper]{geometry} % Changing page shape
\usepackage{graphicx} % Add images to the document 
\usepackage{float} % For placing graphics, allows \begin{figure}[H] etc.
\usepackage[font=small,justification=centering]{caption} % More flexibility for captioning figures
\usepackage{mathabx} % Contains \pnest symbol
\usepackage{enumitem} % Change enumeration style
\usepackage{ifthen} % For \begin{com}
\usepackage[nodayofweek]{datetime}
\usepackage[pdftex]{hyperref}%Hyperlinks, with \href
\usepackage{tikz-cd} % Commutative diagrams

\geometry{left=3cm,right=3cm,top=3cm,bottom=3cm}

\makeatletter\def\subsection{\@startsection{subsection}{1}\z@{.7\linespacing\@plus\linespacing}
    {.5\linespacing}{\normalfont\scshape\centering}}\makeatother % centrally aligned subsections

\newcounter{claimcount}
\newcounter{makedefcount}
\newcounter{thmcount}

\newtheorem{theorem}{Theorem}[section]
\newtheorem{lemma}[theorem]{Lemma} 
\newtheorem{corollary}[theorem]{Corollary}
\newtheorem{proposition}[theorem]{Proposition}
\theoremstyle{definition}
\newtheorem{definition}[theorem]{Definition}
\newtheorem{remark}[theorem]{Remark}

\newtheorem{question}{Question}

\newcommand*{\numberedtheorem}[2]{\theoremstyle{plain}\newtheorem*{makethm\thethmcount}{#1}
    \begin{makethm\thethmcount}#2\end{makethm\thethmcount}\stepcounter{thmcount}}
\newcommand*{\makedef}[9]{\theoremstyle{definition}\newtheorem*{makedefthm\themakedefcount}{#1}
    \begin{makedefthm\themakedefcount}#2\par#3\par#4\par#5\par#6\par#7\par#8\par#9\end{makedefthm\themakedefcount}\stepcounter{makedefcount}}

\newenvironment{claim*}[1]{\stepcounter{claimcount}\par\noindent\underline{Claim:}\space#1}{}

\newenvironment{shortitem}{\begin{itemize}\vspace{-\parskip}\setlength{\itemsep}{1pt}
    \setlength{\parskip}{0pt}\setlength{\parsep}{0pt}}{\end{itemize}}

\newcommand*{\C}{\mathcal{C}}
\newcommand*{\mfc}{\mathfrak{C}}

\newcommand*{\N}{\mathcal{N}}
\newcommand*{\mfq}{\mathfrak{q}}
\newcommand*{\R}{\mathbf{R}}
\newcommand*{\s}{\mathfrak S}
\newcommand*{\mfs}{\mathfrak{S}}

\newcommand*{\W}{\mathcal{W}}

\newcommand*{\Z}{\mathcal{Z}}

\newcommand*{\lhalf}[1]{\overleftarrow{#1}}
\newcommand*{\rhalf}[1]{\overrightarrow{#1}}

\newcommand*{\nest}{\sqsubset}
\newcommand*{\pnest}{\sqsubsetneq}

\newcommand*{\pconest}{\sqsupsetneq}
\newcommand*{\trans}{\pitchfork}

\DeclareMathOperator{\hthet}{H_\theta}
\DeclareMathOperator{\hull}{hull}
\DeclareMathOperator{\diam}{diam}
\DeclareMathOperator{\dist}{\mathsf{d}}
\DeclareMathOperator{\cay}{Cay}
\DeclareMathOperator{\rel}{Rel}
\DeclareMathOperator{\asdim}{asdim}
\DeclareMathOperator{\stab}{Stab}

\newcommand{\ignore}[2]{\left\{\kern-.7ex\left\{#1\right\}\kern-.7ex\right\}_{#2}}

\tikzset{symbol/.style={draw=none,every to/.append style={edge node={node [sloped, allow upside down, auto=false]{$#1$}}}}}

\begin{document}
\title[Projection complexes and quasimedian maps]{Projection complexes and quasimedian maps}
\author[M Hagen]{Mark Hagen} 
\address{School of Mathematics, University of Bristol, Bristol, UK}
\email{markfhagen@posteo.net}

\author[H Petyt]{Harry Petyt}
\address{School of Mathematics, University of Bristol, Bristol, UK}
\email{h.petyt@bristol.ac.uk}

\date{\today}

\maketitle
\begin{abstract}
We use the projection complex machinery of Bestvina--Bromberg--Fujiwara to study hierarchically hyperbolic groups. In particular, we show that if the group has a BBF colouring and its associated hyperbolic spaces are quasiisometric to trees, then the group is quasiisometric to a finite-dimensional CAT(0) cube complex. We deduce various properties, including the Helly property for hierarchically quasiconvex subsets. 
\end{abstract}

\section{Introduction}
The original motivation for defining hierarchically hyperbolic groups (HHGs) was to build a bridge between the worlds of mapping class groups and cubical groups, providing a framework for studying both. The idea is that these groups can be studied via a ``hierarchy'' of associated hyperbolic spaces: a question about the group is broken into questions about the hyperbolic spaces and the relations between them, and the answers are assembled into an answer to the original question, often using core structure theorems established in \cite{HHS2}. This is a common generalisation of the Masur--Minsky machinery \cite{masurminsky} and of \cite{HHS_I}; it is somewhat related to the work of Kim--Koberda on right-angled Artin groups \cite{kimkoberda}, and also to the work of Bestvina--Bromberg--Fujiwara~\cite{bbf} (which features prominently in the present paper).

When a cubical group admits such a structure (which, in fact, all known cubical groups do \cite{hagensusse}), the associated hyperbolic spaces can always be taken to be quasiisometric to trees (i.e. quasitrees), and this raises the converse question: if the associated hyperbolic spaces are all quasitrees, is the hierarchically hyperbolic group necessarily quasiisometric to a cube complex? We give an affirmative answer under a natural assumption. 

\numberedtheorem{Theorem~\ref{qitocubecomplex}} 
{Any hierarchically hyperbolic group with a \emph{BBF colouring} and whose associated hyperbolic spaces are quasitrees admits a quasimedian quasiisometry to a finite-dimensional CAT(0) cube complex.}

See Section~\ref{maintheorem} for the definition of a BBF colouring and a more precise statement. Proposition~\ref{howtobecolourable} establishes a sufficient condition in terms of separability of stabilisers of product regions for an HHG to virtually admit a BBF colouring.  For the purposes of Theorem~\ref{qitocubecomplex}, a virtual BBF colouring is sufficient, since finite-index subgroups of HHGs are HHGs with the same hierarchically hyperbolic structure \cite{ans}.  Examples of HHGs with virtual BBF colourings (also known as \emph{colourable} HHGs \cite{durhamminskysisto:stable}) include:
\begin{itemize}
    \item hyperbolic groups;
    \item mapping class groups of finite-type oriented hyperbolic surfaces~\cite{HHS2,bbf};
    \item virtually compact special groups (e.g. right-angled Artin and right-angled Coxeter groups)~\cite{HHS_I};
    \item many non-special cocompactly cubulated groups, e.g. the Burger-Mozes~\cite{burgermozes} and Wise~\cite{Wise:csc} lattices in products of trees.
\end{itemize}
In fact, there are few known examples of HHGs that do not virtually admit BBF colourings \cite{hagen:non}.  An interesting question is whether the HHG combination theorems from~\cite{HHS2} and~\cite{berlairobbio} preserve the property of BBF colourability.  The motivating examples of HHGs where the associated hyperbolic spaces are quasi-trees are cubical groups, but there are non-cubical examples.  For instance, in forthcoming work of the first author, Russell, Sisto, and Spriano, it is shown that fundamental groups of non-geometric $3$--dimensional graph manifolds have this property, but not all such groups are cocompactly cubulated~\cite{HagenPrzytycki}.

We actually prove something more general, namely that if we drop the quasitree assumption then we still get a(n equivariant) quasimedian quasiisometric embedding in a finite product of hyperbolic spaces  (Theorem~\ref{qietoproduct}). In the mapping class group case, this, minus the quasimedian property, is a result of Bestvina--Bromberg--Fujiwara \cite[Thm~C]{bbf}, and our work extends their construction. Roughly, our strategy is to show that when the map from \cite{bbf} is composed with projection to one of the factor hyperbolic spaces, then \emph{hierarchy paths} get sent to unparametrised quasigeodesics (Proposition~\ref{hptoqg}). From this, work of Russell--Spriano--Tran \cite{rusptr} lets us deduce that the map is quasimedian (Proposition~\ref{psiqm}). 

Our proof of Theorem~\ref{qietoproduct}, together with~\cite[Thm~4.14]{bbf}, yields \emph{property (QT)}, introduced in the context of mapping class groups and residually finite hyperbolic groups in~\cite{newbbf}:

\begin{corollary}\label{cor:qt}
Let $G$ be a hierarchically hyperbolic group with a BBF colouring and whose associated hyperbolic spaces are quasi-trees.  Then $G$ acts metrically properly by quasimedian isometries on a finite product of quasitrees.
\end{corollary}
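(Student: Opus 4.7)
The plan is to combine Theorem~\ref{qietoproduct} with \cite[Thm~4.14]{bbf} in a fairly direct way, since the heavy lifting has already been done. First I invoke Theorem~\ref{qietoproduct} to obtain a $G$-equivariant quasimedian quasi-isometric embedding $\Phi\colon G \to \prod_{c} Y_c$, where the factors $Y_c$ are the quasi-trees of metric spaces produced by the Bestvina--Bromberg--Fujiwara projection complex machinery: one per colour class of the BBF colouring, with the colour-class inputs being (tubular neighbourhoods of) the hyperbolic spaces associated to the HHG structure.

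The next step is to upgrade each factor $Y_c$ from a hyperbolic space to a quasi-tree. This is where the quasi-tree hypothesis on the HHG enters: by \cite[Thm~4.14]{bbf}, when all input spaces to the quasi-tree-of-metric-spaces construction are themselves quasi-trees, the resulting space is a quasi-tree. Applying this colour by colour, the target of $\Phi$ becomes a finite product of quasi-trees, on which $G$ acts by isometries in the factorwise manner built into the BBF construction.

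The last step is to deduce metric properness of this action. Because $\Phi$ is $G$-equivariant and a quasi-isometric embedding, for any basepoint $x_0 \in \prod_c Y_c$ the orbit map $g \mapsto g \cdot x_0$ is coarsely injective and coarsely proper as a map from $G$ into the product metric, so $d(g\cdot x_0, x_0) \to \infty$ as $g \to \infty$ in $G$. Since the action is by isometries, this forces the action of $G$ on $\prod_c Y_c$ to be metrically proper. The quasimedian property of the $G$-action is then inherited directly from the quasimedian property of $\Phi$ guaranteed by Theorem~\ref{qietoproduct}.

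I expect no genuine obstacle: the only points requiring care are bookkeeping ones, namely that the BBF construction is $G$-equivariant and isometric on each factor (rather than merely quasi-isometric), and that the coarse properness of $\Phi$ passes to metric properness of the action. Both follow routinely once the two cited results are in hand.
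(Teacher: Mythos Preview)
Your proposal is correct and matches the paper's own approach, which is simply the sentence ``Our proof of Theorem~\ref{qietoproduct}, together with~\cite[Thm~4.14]{bbf}, yields property (QT)'': invoke Theorem~\ref{qietoproduct} for the equivariant quasimedian quasi-isometric embedding into $\prod_i\C\mfs_i$, then apply \cite[Thm~4.14]{bbf} to see each factor is a quasitree, and read off metric properness from the embedding being an equivariant quasi-isometric embedding. One small inaccuracy: the fact that $G$ acts by \emph{quasimedian} isometries does not really come from $\Psi$ being quasimedian; rather, $G$ acts by isometries on each hyperbolic factor $\C\mfs_i$, and isometries of hyperbolic spaces automatically preserve the (metrically defined) coarse median, so the factorwise action on the product is by quasimedian isometries.
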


Note that Theorem~\ref{qitocubecomplex} does not apply to mapping class groups (except for a few low-complexity examples), as work of Gabai shows that the curve graph is not a quasitree \cite{gabai:almost}. To go from a quasimedian quasiisometric embedding to an actual quasiisometry in the quasitree case, we use a general result, Proposition~\ref{promotingqietoqi}, which we believe to be of independent interest.

It turns out that the quasimedian quasiisometry of Theorem~\ref{qitocubecomplex} can be taken to ``respect convexity'', in the sense that \emph{hierarchically quasiconvex} subsets of the group correspond to convex subcomplexes of the cube complex (Proposition~\ref{hqctoconvex}). Much of the geometric power of CAT(0) cube complexes comes from their convex subcomplexes, and this allows us to transfer that power across to the group. In particular, it allows us to show that such subsets satisfy a coarse version of the Helly property (Theorem~\ref{hellyproperty}), and this leads to bounded packing for subgroups of this type. 

One might hope to establish Theorem~\ref{qitocubecomplex} in greater generality by using the ``cubulation of hulls'' theorem \cite[Thm~2.1]{quasiflats}, which says that the hull of a finite set in a hierarchically hyperbolic space can be approximated by a CAT(0) cube complex. The strategy of proof for that theorem is to take an approximating tree in each associated hyperbolic space and build walls. These approximating trees introduce dependence on the size of the finite set, which would not happen if the hyperbolic spaces were trees to begin with, so one could try to quasicubulate the group by taking uniform cubulations of an exhaustive sequence of finite sets. Unfortunately, the construction of the walls has an essential dependence on the size of the finite set beyond that coming from the individual hyperbolic spaces.

It is natural to wonder to what extent the quasitree condition is necessary: perhaps enough ``quasi-treeness'' is already built into the machinery associated to the BBF colouring. Indeed, in \cite{newbbf} it is shown that mapping class groups act properly on finite products of quasitrees in such a way that orbit maps are quasiisometric embeddings. This raises the following question:

\begin{question}
When is a hierarchically hyperbolic group with a BBF colouring quasi-isometric to a CAT(0) cube complex?
\end{question}

For mapping class groups, it is interesting to ask whether the property of having finitely many conjugacy classes of finite subgroups can be explained using only the hierarchy machinery and the Bestvina-Bromberg-Fujiwara construction, without explicit use of Nielsen realisation as in Bridson's proof \cite{bridson}. That is:

\begin{question}
Does every hierarchically hyperbolic group with a BBF colouring have finitely many conjugacy classes of finite subgroups?
\end{question}

Theorem~\ref{qitocubecomplex} can be viewed as geometric evidence that Question~2 has a positive answer if the associated hyperbolic spaces are quasitrees, and in fact a previous version of this article contained a proof of that special case. A complete proof for all HHGs has since appeared in \cite{haettelhodapetyt:coarse}.

\subsection*{Acknowledgements} 

MH was supported by EPSRC New Investigator Award  EP/R042187/1. HP was supported by an EPSRC DTP at the University of Bristol, studentship 2123260.  We are grateful to the anonymous referee for a number of helpful comments.

\section{Background}

\subsection{Hierarchically hyperbolic groups} 

In the present paper, we deal with \emph{hierarchically hyperbolic groups}, or HHGs. A full definition can be found in \cite[Def.~1.1,~1.21]{HHS2}, and \cite{sistowhatishhs} gives a nice introduction to the concept. Roughly, an HHG consists of a finitely generated group $G$ (with a fixed finite generating set giving a word metric $\dist_G$), a set $\mfs$, and a number $E$ such that the pair $(G,\mfs)$ has the following additional structure.

\begin{shortitem}
\item For each \emph{domain} $U\in\mfs$ there is an associated $E$--hyperbolic space $\C U$ and an $(E,E)$--coarsely Lipschitz $E$--coarsely onto \emph{projection} $\pi_U:G\rightarrow\C U$.
\item $\mfs$ has three relations: \emph{nesting}, $\nest$, a partial order; \emph{orthogonality}, $\bot$, symmetric and antireflexive; and \emph{transversality}, $\trans$, the complement of $\nest$ and $\bot$.
\item $G$ acts cofinitely on $\mfs$, preserving these relations.
\item There is a coarse map $\rho^V_U:\C V\rightarrow\C U$ whenever $U\pnest V$, and a specified subset $\rho^U_V\subset\C V$ of diameter at most $E$ whenever $U\pnest V$ or $U\trans V$.
\item All $g_1,g_2\in G$ have corresponding isometries $g_i:\C U\rightarrow\C g_iU$ such that $g_1g_2=g_1\cdot g_2$, and $g\pi_U(x)=\pi_{gU}(gx)$ and $g\rho^U_V=\rho^{gU}_{gV}$ hold whenever $U\pnest V$ or $U\trans V$ and $x\in G$.
\item For every $x\in G$, the tuple $(\pi_U(x))_\mfs$ is $E$--consistent (defined below).
\end{shortitem}

The axioms also include the existence of a function $E:[0,\infty)\to[0,\infty)$, coming from the \emph{uniqueness axiom} (\cite[Def.~1.1(9)]{HHS2}), whose exact statement is not needed here. When $E$ is referred to as a constant, we are abusing notation and mean $E(0)$. The value of $E(0)$ also depends on the other HHG axioms, such as \emph{partial realisation}; see \cite[Def.~1.1(8)]{HHS2}. When we say that a constant depends only on $E$, we mean that it depends only on the HHG structure. For justifications of the facts that $E(0)$ can be taken to be uniform and that the $\pi_U$ are coarsely onto, see \cite[Rem.~1.6, 1.3]{HHS2}. The above conditions on the isometries $g:\C U\to \C gU$ are simpler than those in the original definition in~\cite{HHS2}, but it is shown in \cite{durhamhagensisto:correction} that they are equivalent, so we just take them as the definition. 

The \emph{complexity} of $(G,\mfs)$ is the (finite) cardinality of a maximal $\pnest$--chain. We now define consistency for tuples; we shall often remark that an inequality holds ``by consistency''.

\begin{definition}[$\kappa$--consistent tuple]
For a number $\kappa\geq E$, let $b=(b_U)\in\prod_\mfs\mathcal P(\C U)$ be a tuple such that every set $b_U$ has diameter at most $\kappa$. We say that $b$ is $\kappa$--consistent if 
\[
\min\big\{\dist_U(b_U,\rho^V_U),\dist_V(b_V,\rho^U_V)\big\}\leq\kappa \hspace{2mm}\text{ whenever } U\trans V, \text{ and}
\]\[
\min\big\{\dist_V(b_V,\rho^U_V),\diam(b_U\cup\rho^V_U(b_V))\big\}\leq\kappa \hspace{2mm}\text{ whenever } U\pnest V.
\]
\end{definition}

Throughout, we write $\dist_U(x,y)$ to mean $\dist_{\C U}(\pi_U(x),\pi_U(y))$ for points (or sets) in $G$. More generally, if $A\subset G$ and $x\in G$, we write $\dist_U(x,A)$ to mean $\dist_{\C U}(\pi_U(x),\pi_U(A))$. If $x\in G$ and $V\trans U$ or $V\pnest U$, then $\dist_U(x,\rho^V_U)$ denotes $\dist_{\C U}(\pi_U(x),\rho^V_U)$.  Let $s\geq100E$. As in \cite[\S2.2]{HHS2}, we say that a domain $U\in\mfs$ is $s$--\emph{relevant} for $x,y\in G$ if $\dist_U(x,y)>s$. We write $\rel_s(x,y)$ for the set of $s$--relevant domains. Any set of pairwise transverse elements of $\rel_s(x,y)$ has a total order $<$, obtained by setting $U<V$ whenever $\dist_U(y,\rho^V_U)\leq E$ \cite[Prop.~2.8]{HHS2}.  

For numbers $r$ and $s$, let $\ignore{r}{s}$ be equal to $r$ if $r\geq s$, and zero otherwise. The \emph{distance formula} for HHGs, \cite[Thm~4.5]{HHS2} states that for any sufficiently large $s$ (in terms of $E$ and the complexity) there are constants $A_s$ and $B_s$ such that, for any $x,y\in G$,
\begin{align}
\frac{1}{A_s}\sum_\mfs\ignore{\dist_U(x,y)}{s}-B_s \leq \dist_G(x,y) \leq A_s\sum_\mfs\ignore{\dist_U(x,y)}{s}+B_s. \tag{DF} \label{hhsdf}
\end{align}

Closely related to the distance formula is the existence of \emph{hierarchy paths}. It is an important theorem that for sufficiently large $D$, any two points in an HHG are connected by a $D$--hierarchy path \cite[Thm~4.4]{HHS2}.

\begin{definition}[Hierarchy path]
In a metric space $X$, a coarse map $\gamma:[0,T]\rightarrow X$ is a $D$--\emph{quasigeodesic}  if it is a $(D,D)$--quasiisometric embedding. A coarse map $\gamma:[0,T]\rightarrow X$ is an \emph{unparametrised} $D$--quasigeodesic if there is a strictly increasing function $f:[0,T]\rightarrow[0,T]$ with $f(0)=0$, $f(T)=T$ such that $\gamma f$ is a $D$--quasigeodesic. A $D$--quasigeodesic $\gamma\subset G$ is a $D$--\emph{hierarchy path} if every $\pi_U\gamma$ is an unparametrised $D$--quasigeodesic.
\end{definition}

It is actually possible to establish the distance formula and the existence of hierarchy paths in the slightly more general setting of \cite[Thms~1.1,~1.2]{bowditchhulls}.

A useful notion in the study of hyperbolic spaces is that of a quasiconvex subset, and there is a natural analogue for HHGs.

\begin{definition}[Hierarchical quasiconvexity]
Let $k:[0,\infty)\rightarrow[0,\infty)$. A set $\Z\subset G$ is $k$--\emph{hierarchically quasiconvex} if every $\pi_U(\Z)$ is $k(0)$--quasiconvex and any point $x\in G$ with $\dist_U(x,\Z)\leq\kappa$ for all domains $U$ is $k(\kappa)$--close to $\Z$. 
\end{definition}

Hierarchically quasiconvex subsets can also be characterised in terms of hierarchy paths: there is a function $k'$ such that $\Z$ is $k$--hierarchically quasiconvex if and only if every $D$--hierarchy path with endpoints in $\Z$ stays $k'(k,D)$--close to $\Z$ \cite[Prop.~5.7]{rusptr}. 

The next two definitions are of prototypical examples of hierarchically quasiconvex subsets. See \cite[Def.~5.15, 6.1]{HHS2} respectively.

\begin{definition}[Standard product region]
For $U\in\mfs$, the \emph{standard product region} associated to $U$ is the nonempty (by the partial realisation axiom of HHGs) set $P_U=\{x\in G:\dist_V(x,\rho^U_V)\leq E \text{ for all } V\trans U\text{, } V\pconest U\}$.
\end{definition}

Note that the equivariance in the definition of an HHG gives $gP_U=P_{gU}$ for all $g\in G$. We also remark that there is a slight formal difference between this definition of product regions and the one in \cite{HHS2}, but this is all the structure we require: despite the name we shall never have cause to refer to the coarse product structure of $P_U$. Recall that in a hyperbolic space $C$, the hull of a subset $A$, denoted $\hull_C(A)$, is the union of all geodesics in $C$ between pairs of points in $A$. 

\begin{definition}[Hull]
For a number $\theta$, the $\theta$--\emph{hull} of a set $A\subset G$, denoted $\hthet(A)$, is the set of points $x\in G$ that project $\theta$--close to $\hull_{\C U}(\pi_U(A))$ in each domain $U$. We give $\hthet(A)$ the subspace metric.
\end{definition}

By \cite[Lem.~6.2]{HHS2}, if $\theta$ is sufficiently large in terms of $E$ then there is a function $k$ such that $\hthet(A)$ is $k$--hierarchically quasiconvex for any set $A$. One can actually use hierarchical quasiconvexity of the hull of a pair of points to prove the distance formula; a proof along these lines can be found in \cite{bowditchhulls}, and the proof in \cite{HHS2} also relies on this property.

\subsection{Median and coarse median spaces} 

\begin{definition}[Median graph, algebra]
A graph is \emph{median} if for any three vertices $x_1,x_2,x_3$ there is a unique vertex $m$, called the median of the triple, such that $\dist(x_i,x_j)=\dist(x_i,m)+\dist(m,x_j)$ whenever $i\neq j$. A \emph{discrete median algebra} is the 0--skeleton of a median graph, equipped with the median operation.
\end{definition}

Though this is not the standard definition of a discrete median algebra, it is an equivalent one, by \cite[Prop.~2.17]{roller}. Note that $\dist(m(x_1,x_2,y),m(x_1,x_2,z))\leq\dist(y,z)$, so the median map is 1--Lipschitz; see \cite[Cor.~2.15]{chatterjidrutuhaglund:kazhdan}, for example.

\begin{remark}\label{mediantocat0}
Every median graph is the 1--skeleton of a unique CAT(0) cube complex \cite[Thm~6.1]{chepoi}, and the 1--skeleton of any CAT(0) cube complex is a median graph. This establishes a three-way correspondence between discrete median algebras, median graphs, and CAT(0) cube complexes.
\end{remark}

We say that a discrete median algebra has \emph{finite rank $R$} if the corresponding CAT(0) cube complex has dimension $R$. Given a subset $A$ of a discrete median algebra $V$, the subalgebra it generates, denoted $\langle A\rangle$, is the smallest subset of $V$ that is closed under the median operation and contains $A$. We say that $A$ is \emph{$M$--median} if the median of any three points in $A$ is $M$--close to $A$. 

For $C>0$, a \emph{$C$--path} in a metric space is a sequence of points $x_0,x_1,\dots,x_n$ such that $\dist(x_i,x_{i+1})\le C$ for all $i$. Following~\cite{bowditchhulls}, we say that a subset $A$ is \emph{$C$--connected} if any two points of $A$ can be joined by a $C$--path.

\begin{proposition} \label{closetohull}
For all numbers $R$, $C$, and $M$, there is a number $H$ such that if $A$ is an $C$--connected $M$--median subset of a discrete median algebra $V$ of rank $R$, then there exists $A'\subset V$ such that $\dist_{Haus}(A,\langle A'\rangle)\leq H$ and $\langle A'\rangle$ is 1--connected.
\end{proposition}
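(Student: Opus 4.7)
The plan is to construct $A'$ as an explicit thickening of $A$ along short geodesics, and then use the $M$-median hypothesis together with the finite rank to bound the resulting median closure.

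For each pair $a, b \in A$ with $\dist(a,b) \leq C$, fix a $V$-geodesic $\gamma_{a,b}$ from $a$ to $b$, and let
\[
A' = A \cup \bigcup_{\substack{a,b \in A \\ \dist(a,b) \leq C}} V(\gamma_{a,b}).
\]
Each $\gamma_{a,b}$ has length at most $C$, so $A'$ is contained in the $C$-neighborhood of $A$. The $C$-connectedness of $A$ produces, between any two points of $A$, a $C$-path whose consecutive pairs are spanned by some $\gamma_{a,b}$; concatenating the corresponding $1$-paths exhibits $A'$ as $1$-connected in $V$.

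To bound $\dist_{Haus}(A, \langle A'\rangle)$, I first observe that $A'$ is itself coarsely median. By the $1$-Lipschitz property of the median in each variable together with the $M$-median hypothesis, the median of any three points in $A'$ (each within $C$ of $A$) lies within $3C+M$ of a median of points in $A$, hence within $3C+M$ of $A\subseteq A'$. So $A'$ is $(3C+M)$-median. The crucial step is then a finite-rank estimate: in a rank-$R$ discrete median algebra, the subalgebra generated by an $M''$-median subset is contained in a neighborhood of that subset whose radius depends only on $R$ and $M''$. Applying this with $M''=3C+M$ yields $\langle A'\rangle \subseteq N_H(A)$ for some $H = H(R,C,M)$.

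For the $1$-connectedness of $\langle A'\rangle$, I argue by induction on the median-depth of expressions: any new median $m(x,y,z)$ of previously-constructed points lies on the interval $[x,y]$, and by choosing the geodesic inside $[x,y]$ compatibly with the thickening $A'$, one exhibits an explicit $1$-path inside $\langle A'\rangle$ linking $m(x,y,z)$ to the earlier levels, propagating $1$-connectedness from $A'$ upward through the iteration.

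The hard part is the rank-dependent neighborhood bound used in the third paragraph: naively iterating the Lipschitz estimate yields geometric blow-up, with radii satisfying $K_{n+1} = 3K_n + M$, so the finite-rank hypothesis is indispensable to cap the iteration depth and extract a uniform $H$ depending only on $R$, $C$, and $M$. Verifying this is where all three hypotheses — finite rank, $M$-median, and $C$-connectedness — combine essentially.
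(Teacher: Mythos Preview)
Your construction of $A'$ and the reduction to a ``$1$--connected, coarsely median implies close to its median closure'' statement is exactly the paper's approach: the paper also thickens $A$ by short geodesics, observes that the result is $(M+3C)$--median by the $1$--Lipschitz property, and then invokes the same finite--rank estimate.

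The genuine gap is that you do not prove the step you yourself flag as ``the hard part''. The statement you need --- that in rank $R$ a $1$--connected $M''$--median subset lies at bounded Hausdorff distance from the subalgebra it generates --- is precisely \cite[Prop.~4.1]{bowditchhulls}, which the paper cites rather than reproves. Your proposed mechanism (``cap the iteration depth'' via rank, using the recursion $K_{n+1}=3K_n+M$) is not how that result is established and is not obviously correct: there is no a~priori bound, depending only on $R$, on the number of median iterations needed to reach $\langle A'\rangle$, so bounding the depth is not the point. Bowditch's argument instead controls the geometry of the generated subalgebra directly using the interval/gate structure in finite rank, and it genuinely uses $1$--connectedness of $A'$ --- a hypothesis your statement of the ``crucial step'' omits, though your $A'$ happens to satisfy it. So as written, the heart of the proposition is asserted rather than argued.

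Your sketch for $1$--connectedness of $\langle A'\rangle$ is salvageable but not as stated: the phrase ``choosing the geodesic inside $[x,y]$ compatibly with the thickening'' does not produce a path in $\langle A'\rangle$. What does work is the $1$--Lipschitz trick: if $x=p_0,\dots,p_k=y$ is a $1$--path in the previous stage, then $i\mapsto m(p_i,y,z)$ is a $1$--path in the next stage from $m(x,y,z)$ to $y$. The paper handles this by citing \cite[Lem.~2.1]{bowditchhulls}.
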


The $C=1$ case of this lemma is proven in \cite[Prop.~4.1]{bowditchhulls}, and there Bowditch remarks that one can show that any $C$--connected $M$--median subset is a bounded Hausdorff distance from the median subalgebra it generates.

\begin{proof}[Proof of Proposition~\ref{closetohull}]
Decompose $A$ as a disjoint union of maximal 1--connected subsets $A_i$. Let $S_i$ be the (nonempty unless $A=A_i$) set of all $A_j$ with $\dist(A_i,A_j)\leq C$, and let $A'$ be the 1--connected set obtained from $A$ by adding, for each $i$, a geodesic of length at most $C$ from $A_i$ to each $A_j\in S_i$. Clearly $\dist_{Haus}(A,A')\leq C$. Since the median map is 1--Lipschitz, $A'$ is $(M+3C)$--median, so by \cite[Prop.~4.1]{bowditchhulls} it is a bounded Hausdorff distance from $\langle A'\rangle$, which is 1--connected by \cite[Lem.~2.1]{bowditchhulls}.
\end{proof}

Although our proof of Proposition~\ref{closetohull} made no explicit reference to the constant $R$, this appears in the proof of \cite[Prop.~4.1]{bowditchhulls}.

A more general notion than median graphs and algebras is that of a \emph{coarse median space}, introduced by Bowditch in \cite{bowditchcoarsemedian}.

\begin{definition}[Coarse median space]
A metric space $(X,\dist)$ is said to be a \emph{coarse median space} if it admits a map $m:X^3\rightarrow X$ with the property that there exists a function $h:\mathbf N\rightarrow[0,\infty)$ such that:
\begin{shortitem}
\item For all $x,y,z,x',y',z'\in X$ we have 
\[
\dist\big((m(x,y,z),m(x',y',z')\big)\leq h(1)\big(\dist(x,x')+\dist(y,y')+\dist(z,z')+1\big).
\]
\item For all $n\in\mathbf N$, if $A\subset X$ has cardinality at most $n$ then there exists a finite discrete median algebra $V$, and maps $f:A\to V$ and $g:V\to X$ such that 
\begin{align*}
&- \hspace{2mm} \dist\big(gm_V(x,y,z),m(g(x),g(y),g(z))\big)\leq h(n) \text{ for all } x,y,z,\in V; \\
&- \hspace{2mm} \dist(a,gf(a))\leq h(n) \text{ for all } a\in A.
\end{align*}
\end{shortitem}
\end{definition}

As mentioned in \cite{bowditchcoarsemedian}, we can also assume that $m$ is symmetric, and that $m(x,x,y)=x$ for all pairs $x,y\in X$. 

Every median graph (i.e. $1$--skeleton of a CAT(0) cube complex~\cite{chepoi}) is a coarse median space~\cite{Bowditch:notes}; the proof of this relies on the fact the convex hull of any finite subset of a CAT(0) cube complex is finite. Hyperbolic spaces are also coarse median spaces, as any finite set of points can be approximated by a tree, and the median function comes naturally from the thin triangles condition. It is shown in \cite[Thm~7.3]{HHS2} that every HHG is a coarse median space, with the function $h$ depending only on $E$. In this case, the median of $x,y,z\in G$ is a point that projects uniformly close to the (coarse) median of the triple $(\pi_U(x),\pi_U(y),\pi_U(z))$ in every hyperbolic space $\C U$.

There is a natural type of structure-preserving map between quasigeodesic coarse median spaces, which Bowditch calls a ``quasimorphism''. We refer to such maps as ``quasimedian'', as in \cite{quasiflats}.

\begin{definition}[Quasimedian map] \label{def:quasimedian}
Let $(X,\dist_X,m_X)$ and $(Y,\dist_Y,m_Y)$ be quasigeodesic coarse median spaces. A map $\psi:Y\rightarrow X$ is $D$--\emph{quasimedian} if
\[
\dist_X\big(\psi m_Y(x,y,z),m_X(\psi(x),\psi(y),\psi(z)\big)\leq D \hspace{2mm} \text{ for all } x,y,z\in Y.
\]
\end{definition}

We conclude this section with a tool for showing that a space is quasiisometric to a CAT(0) cube complex. It says that one can upgrade a quasiisometric embedding in a CAT(0) cube complex to a quasiisometry to a (different) cube complex, as long as the embedding is quasimedian. First we need a simple lemma (which is also observed in~\cite[Section 2]{bowditchhulls}).

\begin{lemma} \label{lem:connectedandmedianimpliesie}
If $Y$ is a 1--connected median subalgebra of a CAT(0) cube complex $X$, then any two points $y,y'\in Y$ can be joined by a $1$--path in $Y$ that is a geodesic of $X$. That is, $Y$ is isometrically embedded.
\end{lemma}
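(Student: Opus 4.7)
The plan is to induct on $D := \dist_X(y, y')$. The base case $D = 0$ is trivial, so assume $D \geq 1$ and that the conclusion holds for all pairs at strictly smaller $X$--distance. It suffices to produce a point $y'' \in Y$ with $\dist_X(y, y'') = 1$ and $\dist_X(y'', y') = D - 1$: the inductive hypothesis applied to $(y'', y')$ yields a $1$--path of length $D - 1$ in $Y$ that is an $X$--geodesic, and prepending $y$ gives the required path from $y$ to $y'$.

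To construct such a $y''$, use the $1$--connectedness of $Y$ to choose any $1$--path $y = z_0, z_1, \dots, z_n = y'$ in $Y$, and set $m_i := m(y, z_i, y')$, where $m$ denotes the median operation of $X$. Since $Y$ is a median subalgebra, each $m_i$ lies in $Y$. The defining property of the median in a median graph gives $\dist_X(y, m_i) + \dist_X(m_i, y') = \dist_X(y, y') = D$, so every $m_i$ lies on some $X$--geodesic from $y$ to $y'$. Moreover, because the median map is $1$--Lipschitz in each coordinate, $\dist_X(m_i, m_{i+1}) \leq \dist_X(z_i, z_{i+1}) \leq 1$ for all $i$.

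Since $m_0 = y \neq y' = m_n$, there is a smallest index $i^{\star}$ with $m_{i^{\star}} \neq y$. For this index, $m_{i^{\star} - 1} = y$, so $\dist_X(y, m_{i^{\star}}) = \dist_X(m_{i^{\star} - 1}, m_{i^{\star}}) \leq 1$; combined with $m_{i^{\star}} \neq y$, this forces $\dist_X(y, m_{i^{\star}}) = 1$. Since $m_{i^{\star}}$ also lies on an $X$--geodesic from $y$ to $y'$, we obtain $\dist_X(m_{i^{\star}}, y') = D - 1$, so setting $y'' := m_{i^{\star}}$ closes the induction.

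The only real obstacle is that a $1$--path in $Y$ supplied by hypothesis need not itself be an $X$--geodesic, and naive geodesic shortening inside $X$ could leave $Y$. The medians $m(y, z_i, y')$ sidestep this precisely because $Y$ is closed under $m$: they produce a sequence of points of $Y$ that hugs an $X$--geodesic from $y$ to $y'$ and moves at speed at most one, which is exactly what is needed to peel off a single geodesic edge of $X$ at each step of the induction.
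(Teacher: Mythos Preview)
Your proof is correct and is essentially the same as the paper's: both induct on $\dist_X(y,y')$, push a $1$--path in $Y$ into the interval $[y,y']$ via the medians $m(y,\cdot,y')$, and use the $1$--Lipschitz property of $m$ to peel off a single geodesic edge. The only cosmetic difference is that you detach the edge at the $y$ end (taking the first $m_i\neq y$) while the paper detaches it at the $y'$ end (taking some $p'_j$ at distance $n-1$ from $y$).
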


\begin{proof}
We proceed by induction on $\dist(y,y')$. If $\dist(y,y')=1$, then there is nothing to prove, so suppose that $\dist(y,y')=n>1$. Since $Y$ is $1$--connected, there is a $1$--path $P=(p_0,\dots,p_m)$ in $Y$ from $y$ to $y'$. Let $P'=(p'_0,\dots,p'_m)$ be the path in the interval $[y,y']$ of $X$ given by $p'_i=m(y,y',p_i)$. As $Y$ is a median subalgebra, $P'$ lies in $Y$, and $P'$ is a $1$--path because $m$ is $1$--Lipschitz. Thus there exists $j$ such that $\dist(y,p'_j)=n-1$. Since $p'_j\in[y,y']$, we have that $\dist(p'_j,y')=1$. By the inductive hypothesis, $y$ is joined to $p'_j$ by a path in $Y$ that is a geodesic in $X$, and adding the edge from $p'_j$ to $y'$ completes the inductive step.
\end{proof}

\begin{proposition} \label{promotingqietoqi}
If a coarsely connected coarse median space $X$ admits a quasimedian quasiisometric embedding $\Phi$ in a finite-dimensional CAT(0) cube complex $C$, then $X$ is quasiisometric to a finite-dimensional CAT(0) cube complex.
\end{proposition}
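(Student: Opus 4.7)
The plan is to replace the image $\Phi(X)$ inside $C$ by a $1$-connected median subalgebra: by Lemma~\ref{lem:connectedandmedianimpliesie} and Remark~\ref{mediantocat0}, that subalgebra will itself be the $1$-skeleton of a finite-dimensional CAT(0) cube complex, and by construction it will sit at finite Hausdorff distance from $\Phi(X)$ in $C$, hence be quasiisometric to $X$.

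First I would post-compose $\Phi$ with a closest-vertex map $C\to C^{(0)}$. This moves every point by at most a uniform amount since $C$ is finite-dimensional, and therefore preserves being a quasimedian quasiisometric embedding: the median on a CAT(0) cube complex is $1$-Lipschitz in each variable in the graph metric, so the error introduced in the quasimedian inequality is bounded in terms of $\dim C$ and the vertex-projection displacement. From now on $A:=\Phi(X)\subset C^{(0)}$. Since $X$ is coarsely connected and $\Phi$ is a quasiisometric embedding, applying $\Phi$ to a coarse path in $X$ gives a bounded-jump path in $A$, so $A$ is $N$-connected for some constant $N$. Next, for the (modified) quasimedian constant $D$, the inequality
\[
\dist_C\big(\Phi m_X(x,y,z),\,m_C(\Phi(x),\Phi(y),\Phi(z))\big)\le D
\]
says exactly that the $C$-median of any three points of $A$ lies within $D$ of a point of $A$; that is, $A$ is $D$-median in the sense of the paper.

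I would then apply Proposition~\ref{closetohull} with $V=C^{(0)}$, $R=\dim C$, and the constants $N$ and $D$ just obtained, producing a set $A'\subset C^{(0)}$ with $\dist_{Haus}(A,\langle A'\rangle)\le H$ and with $\langle A'\rangle$ $1$-connected. By Lemma~\ref{lem:connectedandmedianimpliesie}, $\langle A'\rangle$ is isometrically embedded in $C^{(0)}$; equipped with its induced edges it is therefore a median graph of rank at most $\dim C$ (it is closed under $m_C$, which agrees with the intrinsic median by isometric embedding). Remark~\ref{mediantocat0} then identifies $\langle A'\rangle$ with the $1$-skeleton of a finite-dimensional CAT(0) cube complex $K$.

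Finally, $\Phi$ induces a quasiisometry from $X$ onto $A$ (with the subspace metric from $C$), while the inclusion $A\hookrightarrow\langle A'\rangle$ is a quasiisometry because the two sets are at finite Hausdorff distance in $C$. Composing yields a quasiisometry from $X$ to $K$, as desired. The only step requiring real care is the initial reduction to $C^{(0)}$: one must check that the closest-vertex post-composition preserves the quasimedian property with controlled constants, which reduces to the $1$-Lipschitz property of the median on a CAT(0) cube complex together with the uniform bound on the distance from any point of $C$ to its nearest $0$-cell. Everything else is an assembly of Proposition~\ref{closetohull}, Lemma~\ref{lem:connectedandmedianimpliesie}, and Remark~\ref{mediantocat0}.
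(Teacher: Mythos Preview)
Your proof is correct and follows essentially the same approach as the paper's: perturb to the $0$--skeleton, observe that the image is coarsely connected and $M$--median, invoke Proposition~\ref{closetohull} and Lemma~\ref{lem:connectedandmedianimpliesie}, and conclude via Remark~\ref{mediantocat0}. If anything you are more careful than the paper about why the perturbation to $C^{(0)}$ preserves the quasimedian property.
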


\begin{proof}
By finite-dimensionality of $C$, we can perturb $\Phi$ so that its image is contained in $C^0$, the 0--skeleton of $C$, which is a discrete median algebra by Remark~\ref{mediantocat0}. The image $\Phi(X)$ is coarsely connected because $\Phi$ is coarsely Lipschitz and $X$ is coarsely connected. Since $\Phi$ is quasimedian, the median of any three points in $\Phi(X)$ lies uniformly close to $\Phi(X)$. Proposition~\ref{closetohull} shows that $\Phi(X)$ is a bounded Hausdorff distance from a 1--connected median subalgebra $X'$ of $C^0$, which is isometrically embedded by Lemma~\ref{lem:connectedandmedianimpliesie}. Thus $\Phi$ is a quasiisometry from $X$ to the CAT(0) subcomplex of $C$ whose 0--skeleton is $X'$.
\end{proof}

\subsection{Wallspaces, codimension--1 subgroups, and bounded packing} 

A subgroup $H$ of a group $G$ has \emph{bounded packing} if for each distance $R$ there exists a number $N$ such that at most $N$ cosets of $H$ can be pairwise $R$--close in $G$. See \cite{hruskawisebounded} for an account of this.

Haglund and Paulin introduced wallspaces in \cite{haglundpaulin}. The reader is referred to \cite{hruskawisefiniteness} for a survey in line with our usage, as well as for an account of codimension--1 subgroups. 

\begin{definition}[Wallspace]
A \emph{wall} $W$ in a nonempty set $X$ is a partition of $X$ into \emph{halfspaces}, $X=\lhalf W\bigsqcup\rhalf W$. A pair of points in $X$ is \emph{separated} by $W$ if they do not both lie in the same halfspace. A \emph{wallspace} is a set $X$ with a collection $\W$ of walls such that each pair of points is separated by finitely many walls. 
\end{definition}

If $X$ admits an action of a group $G$ then we say a wallspace structure $(X,\W)$ is $G$--invariant if $G\W=\W$. Such a structure leads to an action of $G$ on a CAT(0) cube complex, namely the dual cube complex defined below.

\begin{definition}[Coherent orientation]
An \emph{orientation} of $\W$ is a choice of halfspace for each $W\in\W$. We say the orientation is \emph{coherent} if no two chosen halfspaces are disjoint and every $x\in X$ lies in all but finitely many of the chosen halfspaces. 
\end{definition}

Such orientations are more commonly called \emph{consistent and canonical}, but this conflicts with the terminology for consistency of tuples.

\begin{definition}[Dual complex]
Consider the graph with a vertex for each coherent orientation $p$ of $\W$ and an edge joining $p,q$ whenever they differ on exactly one wall. This graph is median \cite[Prop.~4.5]{nica}, so it is the 1--skeleton of a unique CAT(0) cube complex by Remark~\ref{mediantocat0}. We call this complex the \emph{cube complex dual to} $(X,\W)$.
\end{definition}

\begin{definition}[Codimension--1]
Let $G$ be a finitely generated group with Cayley graph $\cay(G)$. The subgroup $H<G$ is \emph{codimension--1} if for some number $r$ there are at least two $H$--orbits of components $K_i$ of $\cay(G)\smallsetminus \N_r(H)$ with the property that $K_i$ is not contained in the $s$--neighbourhood of $H$ for any $s$. We call such components \emph{deep}.
\end{definition}

Following Sageev \cite{sageev}, one can use the existence of a codimension--1 subgroup to construct a $G$--invariant wallspace structure on $G$, leading to an action of $G$ on the dual cube complex: let $K$ be the $H$--orbit of a deep component in $\cay(G)\smallsetminus \N_r(H)$, and for each translate $gK$ of $K$ by $g\in G$, create a wall by partitioning $G$ as $gK\bigsqcup(G\smallsetminus gK)$.

\section{Cubulation} \label{maintheorem}

In \cite{bbf}, a construction is made that, roughly speaking, takes a collection $\mathbf{Y}$ of geodesic spaces $Y$ with ``projection maps'' between them, and, for any sufficiently large constant $K$, produces a metric space $\C_K\mathbf{Y}$ with an isometrically embedded copy of each $Y$. It is usual to suppress the constant $K$. It turns out that if the $Y$ are uniformly hyperbolic then $\C\mathbf Y$ is hyperbolic \cite[Thm~4.17]{bbf}. Lemma~\ref{projectionaxioms} shows that any set $\mfs_i$ of pairwise transverse domains in an HHG satisfies the conditions of the construction, allowing one to build such a hyperbolic space from them. According to \cite[\S~4.4]{bbf}, a sufficiently nice group action on $\mathbf Y$ induces an action on $\C\mathbf Y$. Lemma~\ref{projectionaxioms} also shows that if $\mfs_i$ is $G$--invariant then the action of $G$ is sufficiently nice, so $G$ acts on $\C\mfs_i$. 

By a \emph{BBF colouring} of an HHG $(G,\mfs)$ we mean a decomposition $\mfs=\bigsqcup_{i=1}^\chi\mfs_i$ into a finite number of $G$--invariant subsets $\mfs_i$ such that any two domains in the same set are transverse. In \cite[\S5]{bbf}, such a colouring is constructed for (a finite-index subgroup of) the mapping class group. It is easy to see that the existence of a BBF colouring is equivalent to the property that no domain is orthogonal to any of its translates. 

%EXAMPLES OF COLOURED. SAY MCG HAS. SAY THAT GRAPH MANIFOLDS HAVE ALL DOMAINS QUASITREES

The following theorem, which we prove in Section~\ref{sectionproof}, extends \cite[Thm~C]{bbf}. Since their construction only uses the hierarchy structure of the mapping class group, the main addition to their result is the quasimedian property.

\begin{theorem} \label{qietoproduct}
Let $(G,\mfs)$ be a hierarchically hyperbolic group with a BBF colouring $\mfs=\bigsqcup^\chi\mfs_i$. There is a number $K_0=K_0(E)$ such that for any $K>K_0$ there is an equivariant quasimedian quasiisometric embedding $\Psi:G\rightarrow\prod^\chi\C\mfs_i$, with constants depending only on $E$, $K$, and $\chi$. 
\end{theorem}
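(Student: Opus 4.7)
The plan is to follow the strategy outlined in the introduction: construct an equivariant map $\Psi$ into $\prod^\chi\C\mfs_i$ using the BBF projection complex machinery applied to each colour class, verify that it is a quasiisometric embedding by combining the BBF distance formula in each $\C\mfs_i$ with the HHG distance formula \eqref{hhsdf}, and finally upgrade it to a quasimedian map by showing that hierarchy paths in $G$ project to unparametrised quasigeodesics in each factor.

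First I would use Lemma~\ref{projectionaxioms} to verify that the projection data on each $\mfs_i$ inherited from the HHG structure satisfies BBF's axioms and that the $G$--action is compatible. Hence for $K$ sufficiently large in terms of $E$, each $\C\mfs_i$ is a hyperbolic space on which $G$ acts by isometries, and each $\C U$ for $U\in\mfs_i$ is isometrically embedded. The map $\Psi$ is then the product of the natural projections $\pi_i\colon G\to\C\mfs_i$ coming from the BBF construction: $\pi_i(g)$ is a choice of point in the copy of $\C U\subset\C\mfs_i$ representing $\pi_U(g)$ for any $U\in\mfs_i$; by the properties of the BBF construction this is independent of the choice of $U$ up to a uniformly bounded error, and equivariance is inherited from the equivariance of the family $\{\pi_U\}_{U\in\mfs}$.

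For the quasiisometric embedding, BBF's distance formula in $\C\mfs_i$ (see \cite[Thm~4.13]{bbf}) gives $\dist_{\C\mfs_i}(\pi_i(x),\pi_i(y))\asymp\sum_{U\in\mfs_i}\ignore{\dist_U(x,y)}{s}$ once $K$ is large enough. Summing over $i=1,\ldots,\chi$ and using $\mfs=\bigsqcup^\chi\mfs_i$ yields $\sum_i\dist_{\C\mfs_i}(\pi_i(x),\pi_i(y))\asymp\sum_{U\in\mfs}\ignore{\dist_U(x,y)}{s}\asymp\dist_G(x,y)$ by \eqref{hhsdf}, with constants depending only on $E$, $K$, and $\chi$. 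This gives the quasiisometric embedding. For the quasimedian property, Proposition~\ref{hptoqg} says that if $\gamma$ is a hierarchy path in $G$ then each $\pi_i\circ\gamma$ is an unparametrised quasigeodesic in $\C\mfs_i$, and Proposition~\ref{psiqm} (following \cite{rusptr}) then promotes this to the bound in Definition~\ref{def:quasimedian}: three hierarchy paths between the pairs of $\{x,y,z\}$ coarsely meet at $m(x,y,z)$, and their projections to each hyperbolic factor $\C\mfs_i$ are unparametrised quasigeodesics, so the meeting point maps close to the coarse median in every factor.

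The hard step is Proposition~\ref{hptoqg}. The isometric copies $\C U\subset\C\mfs_i$ are far apart in the BBF metric, so as a hierarchy path $\gamma$ traverses $G$ its projection to $\C\mfs_i$ must hop between these copies, and one must show that it visits them in an essentially monotone way and with only bounded backtracking. Establishing this should combine the total order on pairwise transverse $s$--relevant domains from \cite[Prop.~2.8]{HHS2} with consistency of the tuple $(\pi_U(\gamma(t)))_U$, to show that the visits of $\pi_i\circ\gamma$ to the various $\C U$ with $U\in\mfs_i$ occur in the order predicted by this ordering, and then to combine this with the fact that each $\pi_U\circ\gamma$ is already an unparametrised quasigeodesic in $\C U$.
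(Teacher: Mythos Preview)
Your overall strategy matches the paper's: build $\Psi$ factor by factor, verify the quasiisometric embedding by playing the BBF distance formula against \eqref{hhsdf}, and deduce the quasimedian property from Proposition~\ref{hptoqg} via Proposition~\ref{psiqm}. However, your construction of the factor maps contains a genuine error. You assert that for any $U\in\mfs_i$ the point $\pi_U(g)\in\C U\subset\C\mfs_i$ is independent of $U$ up to bounded error. This is false: if $U,V\in\mfs_i$ and some $W\in\mfs_i$ has $\dist_W(\rho^U_W,\rho^V_W)$ large, then $\C U$ and $\C V$ are far apart in $\C\mfs_i$ by the BBF distance formula, so $\pi_U(g)$ and $\pi_V(g)$ cannot be close for \emph{any} $g$. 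The paper instead fixes a single $U_i\in\mfs_i$ with $1\in P_{U_i}$ (Lemma~\ref{productregions}) and sets $\psi_i(g)=g\cdot\pi_{U_i}(1)=\pi_{gU_i}(g)$; this is manifestly equivariant, and the crucial consequence (Lemma~\ref{bbfnearpi}) is that $\pi^\flat_U\psi_i(g)$ is $E$--close to $\pi_U(g)$ for every $U\in\mfs_i$. That is the property you actually need to transfer terms between the two distance formulas, and it comes from $g\in P_{gU_i}$, not from any general well-definedness.

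Your sketch of Proposition~\ref{hptoqg} identifies the right ingredients (the total order on relevant transverse domains and consistency to control the order of visits), but omits the mechanism the paper uses to assemble them into a global quasigeodesic. The paper decomposes $\psi_i\gamma$ as an alternating concatenation: pieces $\psi_i\gamma_{U_j}$ lying $6K$--close to the relevant $\C U_j$ (Lemma~\ref{closeonbigguys}), interleaved with transition pieces $\psi_i\alpha_j$ shown separately to be uniform quasigeodesics (Lemma~\ref{alphafellow}), with uniformly bounded overlaps between consecutive pieces (Lemma~\ref{smalloverlaps}). It then feeds this into a local-to-global lemma for piecewise quasigeodesics in hyperbolic spaces (Lemma~\ref{hwlocalgeodesic}, from~\cite{hagenwise}), choosing the relevance threshold $M$ large enough that the $\gamma_{U_j}$ pieces exceed the length threshold $L_0$ of that lemma. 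Knowing only that the visits occur in the correct order with bounded backtracking does not by itself produce a uniform quasigeodesic constant; you need some such concatenation argument.
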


It follows that the conclusion of Theorem~\ref{qietoproduct} also holds for any group that is virtually an HHG with a BBF colouring, even though such a group need not be an HHG itself \cite{petytspriano:unbounded}. 

Before proving Theorem~\ref{qietoproduct}, we establish sufficient conditions for an HHG to virtually admit a BBF colouring. Note that the equivariance in the definition of an HHG ensures that $\stab_G(U)$ acts on the product region $P_U$ for all $U\in\mfs$.

\begin{proposition} \label{howtobecolourable}
If $(G,\mfs)$ is an HHG such that, for every $U\in\mfs$, the subgroup $\stab_G(U)$ is separable in $G$ and acts cocompactly on $P_U$, then $G$ has a finite-index subgroup that admits a BBF colouring.
\end{proposition}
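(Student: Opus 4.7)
The plan is to find a finite-index normal subgroup $G'\leq G$ such that no domain $U\in\mfs$ is orthogonal to any of its $G'$-translates, since the paper already notes that this is equivalent to $(G',\mfs)$ admitting a BBF colouring. By cofiniteness of the action on $\mfs$, pick $G$-orbit representatives $U_1,\dots,U_n$, and write $H_i=\stab_G(U_i)$. Because $G'$ will be normal in $G$, the target property reduces to the demand that $G'\cap A_i=\emptyset$ for every $i$, where $A_i=\{g\in G:gU_i\perp U_i\}$; note that $A_i\cap H_i=\emptyset$ since $\perp$ is antireflexive.

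The heart of the argument is to show that each $A_i$ lies in a finite union of double cosets $H_ifH_i$. Given $g\in A_i$, partial realisation applied to the orthogonal pair $\{U_i,gU_i\}$ produces a point $p\in P_{U_i}\cap P_{gU_i}=P_{U_i}\cap gP_{U_i}$, so both $p$ and $g^{-1}p$ lie in $P_{U_i}$. Cocompactness of $H_i$ on $P_{U_i}$ provides a finite fundamental domain $D_i\subset P_{U_i}$ and lets us write $p=h_1d_1$ and $g^{-1}p=h_2d_2$ with $h_1,h_2\in H_i$ and $d_1,d_2\in D_i$. Then $g=h_1(d_1d_2^{-1})h_2^{-1}\in H_iF_iH_i$ with $F_i:=D_iD_i^{-1}$ finite; moreover $d_1d_2^{-1}\notin H_i$, since otherwise $g\in H_i$, contradicting $gU_i\perp U_i$.

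Separability then does the job. For each $i$ and each $f\in F_i\setminus H_i$ arising as such a middle element, pick a finite-index subgroup $K_{i,f}\leq G$ with $H_i\leq K_{i,f}$ and $f\notin K_{i,f}$. Set $G_0:=\bigcap_{i,f}K_{i,f}$, a finite-index subgroup of $G$ containing every $H_i$. If $g\in G_0\cap A_i$, the decomposition $g=h_1fh_2$ with $f\in F_i\setminus H_i$ forces $f=h_1^{-1}gh_2^{-1}\in H_iG_0H_i\subseteq G_0$, contradicting $f\notin K_{i,f}$. Hence $G_0\cap A_i=\emptyset$, and taking the normal core $G':=\bigcap_{g\in G}gG_0g^{-1}$ yields a finite-index normal subgroup still disjoint from every $A_i$. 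Normality delivers the desired translate-orthogonality statement: for $U=hU_i$ and $g\in G'$, the condition $gU\perp U$ is equivalent to $h^{-1}gh\in A_i$, but $h^{-1}gh\in G'$, contradiction.

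Finally, assemble the colouring. Since $G'$ has only finitely many orbits on $\mfs$, we can form the finite graph on these orbits in which two orbits are joined whenever some pair of their members is not transverse (i.e.\ equal, orthogonal, or properly nested), and colour this graph with some $\chi$ colours. The resulting colour classes are $G'$-invariant unions of orbits; two distinct domains in the same class are transverse either by construction (if in different orbits) or, if in a single $G'$-orbit, because they are not orthogonal by the property of $G'$ and cannot be $\pnest$-comparable, as otherwise finite complexity of $\mfs$ would be violated by the chain $U\pnest gU\pnest g^2U\pnest\cdots$. The main obstacle I expect is the step that upgrades ``$p,g^{-1}p\in P_{U_i}$'' to a finite double-coset containment; this is exactly where cocompactness of $H_i$ on $P_{U_i}$, combined with the fact that bounded subsets of $G$ in the word metric are finite, is essential.
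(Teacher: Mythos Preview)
Your argument is correct and follows the same strategy as the paper: partial realisation forces the product regions of orthogonal translates to intersect (the paper phrases this as ``come $R$--close''), cocompactness then traps the offending $g$ in finitely many double cosets of $H_i$, and separability excludes the finitely many bad middle elements. You are somewhat more explicit than the paper---taking the normal core to pass from the representatives $U_i$ to arbitrary $U$, and spelling out the graph-colouring construction of the $\mfs_i$ together with the finite-complexity argument ruling out nesting within a $G'$-orbit---but the core of the proof is identical.
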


\begin{remark}
In the case where $G$ is the mapping class group of a finite-type hyperbolic surface, $\mathfrak S$ is the set of isotopy classes of essential (possibly disconnected) subsurfaces~\cite{HHS2}, and a finite-index BBF-colourable subgroup is provided by~\cite[Prop. 5.8]{bbf}. One could also seemingly produce such a subgroup by combining Proposition~\ref{howtobecolourable} with~\cite[Thm 1.5]{leiningermcreynolds}, since the latter result says that multicurve stabilisers in the mapping class group are separable.
\end{remark}

\begin{proof}[Proof of Proposition~\ref{howtobecolourable}]
Let $U_1,\dots,U_n$ be a complete set of representatives of $G$--orbits of domains, so that every product region is $P_{gU_i}=gP_{U_i}$ for some $g\in G$ and some $i$. If $h\in P_{U_i}$ then $1\in P_{h^{-1}U_i}$, so we may assume that $1\in P_{U_i}$. Using cocompactness, fix $t$ such that $P_{U_i}\subset\N_t(\stab_G(U_i))$ for all $i$.

For a constant $R>0$, suppose that $g\not\in\stab_G(U_i)$ but $P_{U_i}$ and $gP_{U_i}$ come $R$--close. Then there exist $a,b\in\stab_G(U_i)$ with $\dist_G(a,gb)<R+10t$, and, in particular, $|a^{-1}gb|<R+10t$. Separability of $\stab_G(U_i)$ gives a finite-index subgroup $G_i$ of $G$ that contains $\stab_G(U_i)$ but no elements with word length less than $20(R+t)$ that represent nontrivial cosets of $\stab_G(U_i)$. Indeed, any of the finitely many such elements is separated from $\stab_G(U_i)$ by a finite-index subgroup, and one can take $G_i$ to be the intersection of these. We have $a^{-1}gb\not\in G_i$, but we also have $a^{-1},b\in\stab_G(U_i)<G_i$. Thus $g\not\in G_i$, and hence $U_i$ and $gU_i$ are not in the same $G_i$--orbit.

The $G_i$ are finite-index subgroups of $G$ such that no two $G_i$--translates of $P_{U_i}$ are $R$--close, so the intersection $G'=\bigcap^nG_i$ is a finite-index subgroup of $G$ such that no two translates of any product region are $R$--close. It suffices to show that, for sufficiently large $R$, no domain is orthogonal to any of its $G'$--translates.

If $U\bot gU$, then let $z\in P_U$. By the partial realisation axiom, there is an element $x\in G$ such that the distances 
\[
\dist_U(x,z), \hspace{5mm} \dist_{gU}(x,gz), \hspace{5mm} \dist_V(x,\rho^U_V), \hspace{5mm} \text{and} \hspace{5mm} \dist_W(x,\rho^{gU}_W)
\]
are all bounded above by $E$ whenever one of $V\pconest U$ or $V\trans U$ and one of $W\pconest gU$ or $W\trans gU$ holds. Then $x$ is close to both $P_U$ and $P_{gU}$, so the distance between $P_U$ and $P_{gU}$ is bounded in terms of $E$. Taking $R$ larger than this bound completes the proof.
\end{proof}

\subsection{Review of the construction from \cite{bbf}} 

Here we summarise the construction from \cite[\S4]{bbf} in the context of Theorem~\ref{qietoproduct}, and state some of the results of that paper that we shall need in what follows. 

Let $\mathbf Y$ be a collection of geodesic metric spaces, with specified subsets $\pi_Y(X)\subset Y$ whenever $X\neq Y$ are elements of $\mathbf Y$. Write $\dist^\pi_Y(X,Z)$ to mean $\diam(\pi_Y(X)\cup\pi_Y(Z))$ for $X\neq Y\neq Z$. We say that $\mathbf Y$ \emph{satisfies the projection axioms with constant $\vartheta$} if 
\begin{flalign}
& \hspace{2mm} \diam(\pi_Y(X))\leq\vartheta \text{ whenever } X\neq Y; \tag{P0} \label{axiom:p0} \\
& \hspace{2mm} \text{ if } X,Y,Z \text{ are distinct and } \dist^\pi_Y(X,Z)>\vartheta, \text{ then } \dist^\pi_Z(X,Y)\leq\vartheta; \tag{P1} \label{axiom:p1} \\
& \hspace{2mm} \text{ for } X\neq Z, \text{ the set } \{Y:\dist^\pi_Y(X,Z)>\vartheta\} \text{ is finite.} \tag{P2} \label{axiom:p2}
\end{flalign}
If, moreover, a group $G$ acts on $\mathbf Y$, and each $g\in G$ induces isometries $g:Y\to gY$, then we say that the projection axioms are satisfied \emph{$G$--equivariantly} if the isometries satisfy $g_1g_2=g_1\cdot g_2$, and if $g\pi_Y(X)=\pi_{gY}(gX)$ holds for any $X,Y\in\mathbf Y$. 

\begin{lemma} \label{projectionaxioms}
Let $(G,\mfs)$ be an HHG with constant $E$, and $\mfs'$ a set of pairwise transverse domains. The set $\{\C U:U\in\mfs'\}$, equipped with $\pi_U(V)=\rho^V_U$ satisfies the projection axioms with constant $s_0+4E$, where $s_0$ is as in \cite[Thm~4.5]{HHS2}. Furthermore, if $\mfs'$ is $G$--invariant, then the projection axioms are satisfied $G$--equivariantly.
\end{lemma}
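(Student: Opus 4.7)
The plan is to verify (P0), (P1), (P2) separately and then observe equivariance as essentially a tautology. Axiom \eqref{axiom:p0} should be immediate: since the domains in $\mfs'$ are pairwise transverse, $\rho^V_U$ is a well-defined subset of $\C U$ of diameter at most $E$ by the HHG axioms, and $E\leq s_0+4E$.

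For \eqref{axiom:p1}, suppose $U, V, W\in\mfs'$ are distinct and $\dist^\pi_V(U, W)>s_0+4E$. Because $\rho^U_V$ and $\rho^W_V$ each have diameter at most $E$, this forces $\dist_V(\rho^U_V, \rho^W_V)>s_0+2E$. The strategy is to convert this into a distance estimate at a concrete point of $G$, where the Behrstock inequality can bite. I would pick $x\in P_U$ (nonempty by partial realisation); the definition of the product region gives $\dist_V(x, \rho^U_V)\leq E$ and $\dist_W(x, \rho^U_W)\leq E$, using $V\trans U$ and $W\trans U$ respectively. The triangle inequality then forces $\dist_V(x, \rho^W_V)>E$, so the consistency axiom applied to the transverse pair $V, W$ gives $\dist_W(x, \rho^V_W)\leq E$. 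Combining the bounds at $W$ yields $\dist_W(\rho^U_W, \rho^V_W)\leq 2E$, whence $\dist^\pi_W(U, V)\leq 4E\leq s_0+4E$.

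Axiom \eqref{axiom:p2} should come from the finiteness of relevant sets. Fix $U\neq W$ in $\mfs'$ and choose $x\in P_U$, $y\in P_W$. For any $V\in\mfs'\setminus\{U,W\}$ with $\dist^\pi_V(U,W)>s_0+4E$, the same partial-realisation argument gives $\dist_V(x, \rho^U_V),\,\dist_V(y, \rho^W_V)\leq E$, and the triangle inequality then yields $\dist_V(x,y)>s_0$. By \eqref{hhsdf}, only finitely many domains $V$ can satisfy this: otherwise the right-hand sum would be infinite while $\dist_G(x,y)$ is finite.

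Finally, when $\mfs'$ is $G$--invariant, equivariance is built into the HHG axioms: they directly provide isometries $g:\C U\to \C gU$ with $g_1g_2=g_1\cdot g_2$, and the equivariance axiom reads $g\rho^V_U=\rho^{gV}_{gU}$, which is exactly $g\pi_U(V)=\pi_{gU}(gV)$. The only step with real content is \eqref{axiom:p1}, and the key move there is passing from a distance between two $\rho$--sets to a distance at a specific point of $G$, which is what allows the consistency axiom (stated for tuples of points) to be invoked.
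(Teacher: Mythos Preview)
Your proposal is correct and follows essentially the same approach as the paper: pick points in product regions to convert $\rho$--set distances into distances at actual points of $G$, invoke consistency for \eqref{axiom:p1}, and use the distance formula (finiteness of relevant domains) for \eqref{axiom:p2}. The paper in fact obtains the slightly sharper constants $4E$ for \eqref{axiom:p1} and $s_0+2E$ for \eqref{axiom:p2}, but the argument and the ``key move'' you identify are the same.
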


\begin{proof}
One of the axioms of HHGs is that axiom~\eqref{axiom:p0} holds with constant $E$. For axioms~\eqref{axiom:p1} and~\eqref{axiom:p2}, let $x\in P_U$ and $z\in P_V$ lie in the corresponding product regions. If $\dist^\pi_W(U,V)>4E$, then $\dist_W(x,\rho^V_W)>4E-\diam(\rho^U_W)-\dist_W(\rho^U_W,x)-\diam(\rho^V_W)>E$, so by consistency we have $\dist_V(x,\rho^W_V)\leq E$, and hence $\dist^\pi_V(U,W)\leq4E$, establishing axiom~\eqref{axiom:p1}. Inequality~\eqref{hhsdf} applies with threshold $s_0$. The distance $\dist_G(x,z)$ is finite, so there are only finitely many domains $W$ with $\dist_W(x,z)\geq s_0+2E$. (This can also be deduced from \cite[Lem.~2.5]{HHS2}.) By definition, if $W\neq U,V$, then $\pi_W(x)$ and $\pi_W(z)$ are $E$--close to $\rho^U_W$ and $\rho^V_W$, respectively, so axiom~\eqref{axiom:p2} is satisfied with constant $s_0+2E$. The ``furthermore'' statement is immediate from the equivariance in the definition of an HHG.
\end{proof}

Lemma~\ref{projectionaxioms} applies in the case where $\mfs'=\mfs_i$ comes from a BBF colouring, and we henceforth work in this case. According to \cite{bbf}, there is a constant $\Theta=\Theta(E)$ such that all of the following holds whenever $K\geq \Theta$.

The \emph{quasitree of metric spaces} $\C\mfs_i$ is obtained by taking the disjoint union $\bigsqcup_{\mfs_i}\C U$ and attaching an edge of length $L$ from each point in $\rho^U_V$ to each point in $\rho^V_U$ whenever $\dist^\flat_W(\rho^U_W,\rho^V_W)\leq K$ for all $W\in\mfs_i$, where $\dist^\flat_W$ is a small perturbation of $\dist_W$; more information will be given below. The constant $L$ is determined by $E$ and $K$. By \cite[Theorem~A]{bbf}, each $\C U$ is isometrically embedded and totally geodesic in $\C\mfs_i$. In fact, it is a theorem of Hume \cite{hume} that this quasitree of metric spaces is quasiisometric to a tree-graded space each of whose pieces is quasiisometric to some $\C U$, but we shall not need this for the proof of Theorem~\ref{qietoproduct}.

Importantly for us, since the sets $\s_i$ are all $G$--equivariant, the projection axioms are satisfied $G$--equivariantly. The construction of the quasitree of metric spaces yields a natural action of $G$ on $\C\s_i$. 

A nice property related to this is that $\C\mfs_i$ to some extent preserves the coarse geometry of the component spaces. More specifically, as has been mentioned, since the domains $\C U$ in an HHG are all $E$--hyperbolic, $\C\mfs_i$ is also hyperbolic, with constant depending only on $E$ and $K$ \cite[Thm~4.17]{bbf}. Moreover, if the $\C U$ are all $(\lambda,\lambda)$--quasiisometric to trees, then $\C\mfs_i$ is quasiisometric to a tree, with constant depending only on $K$ and $\lambda$ \cite[Thm~4.14]{bbf}.

For $U,V\in\mfs_i$ and $x\in\C U$, define $\pi^\flat_V(x)$ to be $x$ if $U=V$, and $\rho^U_V$ otherwise. The map $\pi^\flat_V$ coarsely agrees with the closest point projection map to $\C V\subset\C\mfs_i$ \cite[Cor.~4.10]{bbf}.

We now discuss the perturbation of $\dist_U$ mentioned above. For each $U$ there is a ``distance function'' $\dist^\flat_U:\C\mfs_i\times\C\mfs_i\rightarrow[0,\infty)$, which is symmetric, sends the diagonal to zero, and, up to a constant depending only on $E$, satisfies the triangle inequality. By \cite[Thm~3.3]{bbf}, there is a constant $\delta=\delta(E)$ such that for all $x,y\in\C\mfs_i$ we have
\begin{align}
\diam(\pi^\flat_U(x)\cup\pi^\flat_U(y))-\delta\leq \dist^\flat_U(x,y)\leq\diam(\pi^\flat_U(x)\cup\pi^\flat_U(y)). \label{flatdistance}
\end{align}

The quasitree of metric spaces has a distance formula that is somewhat akin to the one for HHGs \cite[Thm~4.13]{bbf}: there is a constant $K'=K'(E,K)>K$ such that for any $x,y\in\C\mfs_i$,
\begin{align}
\frac{1}{2}\sum_{\mfs_i}\ignore{\dist^\flat_U(x,y)}{K'} \leq \dist_{\C\mfs_i}(x,y) \leq 6K+4\sum_{\mfs_i}\ignore{\dist^\flat_U(x,y)}{K}. \label{bbfdf}
\end{align}
A significant difference between inequalities \eqref{hhsdf} and \eqref{bbfdf} is that inequality \eqref{bbfdf} does not have thresholding: one cannot vary the constants $K$ and $K'$ once the space $\C\mfs_i$ has been built.

As a final remark, note that, according to \cite[Thm~4.1]{bbfs}, instead of altering the distance function and using $\dist^\flat$, one could instead perturb the $\rho$--points from the HHG structure and work with the usual distance. This approach gives a tighter distance formula, with threshold $K$ on both sides, but although it simplifies proofs about $\C\mfs_i$ itself, it makes no difference here, so we choose to work with the original HHG structure.

\subsection{The proof of Theorem~\ref{qietoproduct}} \label{sectionproof} 

We work in the notation of the statement of Theorem~\ref{qietoproduct}. That is, $(G,\s)$ is an HHG with a BBF colouring $\s=\bigsqcup_{i=1}^\chi\s_i$. We recall the convention of writing $\dist_U(x,y)$ to mean $\dist_{\C U}(\pi_U(x),\pi_U(y))$.

\begin{lemma} \label{productregions}
There is a domain $U_i\in\mfs_i$ such that $\dist_U(g,\rho^{gU_i}_U)\leq E$ for every $g\in G$ and all $U\in\mfs_i\smallsetminus\{U_i\}$.
\end{lemma}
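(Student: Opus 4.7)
The plan is to reduce the lemma to choosing a representative $U_i$ of some $G$-orbit in $\mfs_i$ for which the identity element lies in the product region $P_{U_i}$, after which everything becomes a straightforward unwinding of equivariance and of the defining condition of $P_{gU_i}$. Concretely, the key observation is that if $1 \in P_{U_i}$, then by the HHG equivariance axiom $gP_{U_i} = P_{gU_i}$, so $g \in P_{gU_i}$ for every $g \in G$, and then the definition of the product region directly gives the estimate we want, since two distinct elements of $\mfs_i$ are transverse.

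First, I would show that such a $U_i$ exists. Start with any $V \in \mfs_i$; the set $P_V$ is nonempty by the partial realisation axiom (as noted in the definition of product region). Pick any $h \in P_V$ and set $U_i \vcentcolon= h^{-1}V$, which lies in $\mfs_i$ because $\mfs_i$ is $G$--invariant. By the equivariance of the HHG structure one has $P_{U_i} = P_{h^{-1}V} = h^{-1}P_V$, so $1 = h^{-1}h \in P_{U_i}$.

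Second, I would unpack what this gives. Fix any $g \in G$ and any $U \in \mfs_i \smallsetminus \{U_i\}$ with $U \neq gU_i$ (the case $U = gU_i$ is excluded, since $\rho^{gU_i}_U$ is only defined when $U$ and $gU_i$ are distinct and transverse or properly nested). By equivariance, $g \in gP_{U_i} = P_{gU_i}$. Because $\mfs_i$ consists of pairwise transverse domains, the hypothesis $U \neq gU_i$ together with $U, gU_i \in \mfs_i$ forces $U \trans gU_i$. Applying the defining inequality of $P_{gU_i}$ to the point $g$ and the transverse domain $U$ then yields $\dist_U(g, \rho^{gU_i}_U) \leq E$, as required.

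There is no real obstacle here; the lemma is essentially a formal consequence of the HHG equivariance relation $gP_U = P_{gU}$ and the BBF colouring condition that distinct elements of $\mfs_i$ are transverse. The only point requiring a moment of care is the implicit interpretation of the conclusion in the degenerate case $U = gU_i$, and this is harmless.
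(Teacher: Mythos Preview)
Your proposal is correct and follows essentially the same argument as the paper: pick any domain in $\mfs_i$, translate by an element of its product region so that $1\in P_{U_i}$, and then apply equivariance $gP_{U_i}=P_{gU_i}$ together with the definition of a product region. Your extra care about the case $U=gU_i$ is appropriate (the paper's statement is slightly imprecise here, and the lemma is only ever applied with $U\neq gU_i$).
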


\begin{proof}
Let $U\in\mfs_i$, let $h$ be any point in the product region $P_U$, and set $U_i=h^{-1}U$, so that $1\in P_{U_i}$. Since $gP_{U_i}=P_{gU_i}$ we are done by the definition of a product region.
\end{proof}

\makedef{Choice of constants}
{   We fix a choice of the constant $K$ with respect to another number, $D$. The reasoning behind this choice will become apparent as the proof progresses. By \cite[Thm~4.4]{HHS2}, there is a constant $D_0$ depending only on $E$ such that any two points in $G$ are joined by a $D_0$--hierarchy path. Let $D>\max\{100E,D_0,\Theta\}$, and choose $K>101D$. We shall fix a particular value of $D$ later, when we have the necessary context.}{}{}{}{}{}{}{}  
%\begin{com} I ended up not moving the discussion about %constants from later. I didn't think it made things %clearer, as $D_1$ needs a couple of auxiliary %constants that got in the way of the point. %\end{com}

\makedef{Construction of $\Psi$}
{   For each $i$, define $\psi_i:G\rightarrow\C\mfs_i$ by setting $\psi_i(g)=g\pi_{U_i}(1)$. Note that $\psi_i(g)\in\C gU_i\subset\C\mfs_i$. Let $\Psi=(\psi_1,\dots,\psi_\chi)$.}{}{}{}{}{}{}{}

The fact that $\Psi$ is a quasiisometric embedding is proved as Proposition~\ref{psiisqi}. This is done in \cite[\S~5.3]{bbf} in the mapping class group case, and we work in the general HHG case. Proposition~\ref{psiqm} shows that $\Psi$ is quasimedian.

\begin{lemma} \label{bbfnearpi}
$\dist_U(\pi^\flat_U\psi_i(g),g)\leq E$ for any $g\in G$ and any $U\in\mfs_i$.
\end{lemma}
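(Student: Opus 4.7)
The plan is to split into two cases depending on whether $U$ is equal to $gU_i$, and in each case reduce the claim to a definition or a direct application of Lemma~\ref{productregions}.

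First, suppose $U=gU_i$. Then $\psi_i(g)=g\pi_{U_i}(1)$ lies in $\C gU_i=\C U$, so by definition of $\pi^\flat_U$ we have $\pi^\flat_U\psi_i(g)=\psi_i(g)$. By the equivariance of the HHG structure, $g\pi_{U_i}(1)=\pi_{gU_i}(g\cdot 1)=\pi_U(g)$, and the claimed inequality becomes $\dist_U(\pi_U(g),\pi_U(g))=0$, which is trivially at most $E$.

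Next, suppose $U\neq gU_i$. Both $U$ and $gU_i$ belong to $\mfs_i$, so by the BBF colouring assumption they are transverse. By definition of $\pi^\flat_U$ on the subset $\C gU_i$, this gives $\pi^\flat_U\psi_i(g)=\rho^{gU_i}_U$. Now apply Lemma~\ref{productregions}: since $1\in P_{U_i}$ and translates of product regions satisfy $gP_{U_i}=P_{gU_i}$, we have $g\in P_{gU_i}$, and hence $\dist_U(g,\rho^{gU_i}_U)\le E$ for any $U\in\mfs_i$ transverse to $gU_i$. This is precisely what is needed.

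There is no genuine obstacle here; the step that requires the most care is simply matching the two pieces of notation, namely that (i) $\psi_i(g)$ is a point living in the isometrically embedded copy $\C gU_i\subset\C\mfs_i$, which is what makes the two-case analysis of $\pi^\flat_U$ meaningful, and (ii) that the chosen basepoint property of $U_i$ from Lemma~\ref{productregions}, together with equivariance, is equivalent to the statement $g\in P_{gU_i}$, which is what produces the bound in the transverse case.
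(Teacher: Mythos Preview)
Your proof is correct and follows essentially the same two-case split as the paper's own argument: when $U=gU_i$ one uses equivariance to identify $\psi_i(g)$ with $\pi_U(g)$, and when $U\neq gU_i$ one applies Lemma~\ref{productregions}. The only difference is that you spell out the intermediate justifications (transversality of $U$ and $gU_i$, and the fact that $g\in P_{gU_i}$) in more detail than the paper does.
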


\begin{proof}
If $U=gU_i$ then $\pi^\flat_U\psi_i(g)=\pi^\flat_U\pi_{gU_i}(g)=\pi_U(g)$. Otherwise, $\pi^\flat_U\psi_i(g)=\rho^{gU_i}_U$, and we are done by Lemma~\ref{productregions}.
\end{proof}

\begin{lemma} \label{psicoarselipschitz}
$\Psi$ is coarsely Lipschitz, with constants depending only on $E$, $K$, and $\chi$.
\end{lemma}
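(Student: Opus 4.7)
The plan is to show that each $\psi_i : G \to \C\mfs_i$ is coarsely Lipschitz with constants depending only on $E$ and $K$; coarse Lipschitzness of $\Psi$ will then follow with an extra factor of $\chi$ coming from the product structure on the codomain. Since the Cayley graph of $G$ is a geodesic metric space, it suffices to bound $\dist_{\C\mfs_i}(\psi_i(g),\psi_i(g'))$ uniformly whenever $\dist_G(g,g')=1$, after which the triangle inequality extends the estimate to arbitrary pairs.

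Fix such a pair $g,g'$ and a domain $U\in\mfs_i$. The projection $\pi_U$ is $(E,E)$--coarsely Lipschitz, so $\dist_U(g,g')\leq 2E$. By Lemma~\ref{bbfnearpi}, each of $\pi^\flat_U\psi_i(g)$ and $\pi^\flat_U\psi_i(g')$ lies within $E$ of $\pi_U(g)$ and $\pi_U(g')$ respectively in $\C U$ (and has diameter $\leq E$, as it is either a coarse point or a $\rho$--set). Combining these observations and applying inequality~\eqref{flatdistance} gives a uniform bound
\[
\dist^\flat_U(\psi_i(g),\psi_i(g'))\leq\diam\bigl(\pi^\flat_U\psi_i(g)\cup\pi^\flat_U\psi_i(g')\bigr)\leq\dist_U(g,g')+O(E),
\]
which for adjacent $g,g'$ is itself of order $E$, uniformly in $U\in\mfs_i$.

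To conclude, I would feed this into the BBF distance formula~\eqref{bbfdf}. The Choice of constants ensures $K>101D>100E$, comfortably larger than the $O(E)$ bound above, so every term $\ignore{\dist^\flat_U(\psi_i(g),\psi_i(g'))}{K}$ in~\eqref{bbfdf} vanishes, leaving only the additive constant $6K$. This gives $\dist_{\C\mfs_i}(\psi_i(g),\psi_i(g'))\leq 6K$ for adjacent $g,g'$, which propagates to a global $6K$--Lipschitz bound on $\psi_i$ by concatenation along Cayley-graph geodesics, and the product step then yields the claimed coarse Lipschitz bound on $\Psi$. The only real check is that $K$ absorbs the $E$--sized errors from~\eqref{flatdistance} and Lemma~\ref{bbfnearpi}; the Choice of constants was made precisely for this, so I do not anticipate any substantive obstacle.
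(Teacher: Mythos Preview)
Your proposal is correct and follows essentially the same route as the paper: bound $\dist^\flat_U(\psi_i(g),\psi_i(g'))$ by an $O(E)$ quantity for adjacent $g,g'$, then apply the upper bound in~\eqref{bbfdf} with $K$ large enough to kill every threshold term. The only cosmetic difference is that the paper works at the identity (using equivariance implicitly) and carries out the case analysis on $W$ versus $U_i,h_jU_i$ by hand via Lemma~\ref{productregions}, whereas you invoke Lemma~\ref{bbfnearpi} to package that case analysis uniformly; this is a mild streamlining, not a different argument.
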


\begin{proof}
Write $\{h_1,\dots,h_n\}$ for the fixed finite generating set of $G$. Note that $\dist_U(1,h_j)\leq2E$. It suffices to bound $\dist_{\C\mfs_i}(\psi_i(1),\psi_i(h_j))$. By Lemma~\ref{productregions} and inequality \eqref{flatdistance}, if $W\not\in\{U_i,h_jU_i\}$ then we have 
\begin{align*}
\dist^\flat_W(\psi_i(1),\psi_i(h_j)) &\leq\diam\big(\rho^{U_i}_W\cup\rho^{h_jU_i}_W\big) \\ 
&\leq 2E+\dist_W\big(\rho^{U_i}_W,1\big)+\dist_W(1,h_j)+\dist_W\big(h_j,\rho^{h_jU_i}_W\big) \leq 6E.
\end{align*}
Similarly, if $W=U_i\neq h_jU_i$ or $W=h_jU_i\neq U_i$ then $\dist^\flat_W(\psi_i(1),\psi_i(h_j))\leq3E$, and if $W=U_i=h_jU_i$ then $\dist^\flat_W(\psi_i(1),\psi_i(h_j))\leq4E$. It follows from inequality \eqref{bbfdf} that $\dist_{\C\mfs_i}(\psi_i(1),\psi_i(h_j))$ is bounded, as $K>6E$.  
\end{proof}

\begin{proposition}[Quasiisometric embedding] \label{psiisqi}
There is a number $\kappa=\kappa(E,K)$ such that $\Psi$ is a $(\kappa,\kappa)$--quasiisometric embedding.
\end{proposition}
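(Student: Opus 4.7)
The upper bound is immediate from Lemma~\ref{psicoarselipschitz}, which already shows $\Psi$ is coarsely Lipschitz, so the task reduces to establishing a lower bound of the form $\dist_G(x,y)\le \kappa\sum_i\dist_{\C\mfs_i}(\psi_i(x),\psi_i(y))+\kappa$. The plan is to start from the HHG distance formula~\eqref{hhsdf}, reorganise the sum by colour, and bound the contribution of each colour by $\dist_{\C\mfs_i}(\psi_i(x),\psi_i(y))$ via the BBF distance formula~\eqref{bbfdf}.

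The pivotal comparison is that for each $U\in\mfs_i$ and each $x,y\in G$ one has
\[
\dist^\flat_U(\psi_i(x),\psi_i(y))\ge \dist_U(x,y)-2E-\delta.
\]
Indeed, applying the left inequality in~\eqref{flatdistance} and then Lemma~\ref{bbfnearpi} to each of $\pi^\flat_U\psi_i(x)$ and $\pi^\flat_U\psi_i(y)$ gives
\[
\dist^\flat_U(\psi_i(x),\psi_i(y))\ge \dist_U\bigl(\pi^\flat_U\psi_i(x),\pi^\flat_U\psi_i(y)\bigr)-\delta\ge \dist_U(x,y)-2E-\delta,
\]
where in the first step I use that $\diam(A\cup B)\ge \dist(a,b)$ for any $a\in A$, $b\in B$.

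With this in hand, I would fix a threshold $s=s(E,K)$ for the HHG distance formula that is large enough to exceed $s_0$ and also to satisfy $s\ge 2(K'+2E+\delta)$. Then, whenever $\dist_U(x,y)\ge s$ for some $U\in\mfs_i$, the displayed inequality forces $\dist^\flat_U(\psi_i(x),\psi_i(y))\ge K'$, and moreover $\dist^\flat_U(\psi_i(x),\psi_i(y))\ge \tfrac12\dist_U(x,y)$. Consequently
\[
\sum_{U\in\mfs_i}\ignore{\dist_U(x,y)}{s}\le 2\sum_{U\in\mfs_i}\ignore{\dist^\flat_U(\psi_i(x),\psi_i(y))}{K'}\le 4\,\dist_{\C\mfs_i}(\psi_i(x),\psi_i(y)),
\]
using the lower bound in~\eqref{bbfdf} at the last step. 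Summing over the $\chi$ colours and invoking the upper bound in~\eqref{hhsdf} yields $\dist_G(x,y)\le 4A_s\sum_i\dist_{\C\mfs_i}(\psi_i(x),\psi_i(y))+B_s$, which is the required lower bound with constants depending only on $E$ and $K$.

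The main potential obstacle is aligning the various thresholds, since the BBF thresholds $K$ and $K'$ are baked into $\C\mfs_i$ and cannot be adjusted once the space is built, whereas the HHG threshold $s$ is a free parameter. The resolution is simply to choose $s$ last, taking it larger than $K'$ plus the universal error $2E+\delta$ produced by Lemma~\ref{bbfnearpi} and~\eqref{flatdistance}; this is permissible because the HHG distance formula holds for all sufficiently large thresholds. Everything else is routine bookkeeping of constants via~\eqref{hhsdf} and~\eqref{bbfdf}.
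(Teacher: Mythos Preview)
Your proof is correct and follows essentially the same route as the paper's: use Lemma~\ref{psicoarselipschitz} for the upper bound, then compare the HHG distance formula term-by-term with the BBF distance formula via the comparison $\dist^\flat_U(\psi_i(x),\psi_i(y))\ge \dist_U(x,y)-2E-\delta$, choosing the HHG threshold $s$ large relative to $K'$. The only cosmetic difference is that the paper reduces to $x=1$, $y=g$ by equivariance and derives the key comparison by a short case analysis on whether $U\in\{U_i,gU_i\}$, whereas you obtain it uniformly from Lemma~\ref{bbfnearpi} and~\eqref{flatdistance}; these are equivalent arguments.
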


\begin{proof}
By Lemma~\ref{psicoarselipschitz} and the fact that $G$ acts on $\C\mfs_i$ by isometries, it suffices to bound below the distance between $\Psi(1)$ and $\Psi(g)$ by a linear function of $\dist_G(1,g)$. By inequality \eqref{bbfdf}, $2\dist_{\C\mfs_i}(x,y)\geq\sum_{\mfs_i}\ignore{\dist^\flat_U(x,y)}{K'}$ for any $x,y\in\C\mfs_i$. Fix $s>K'+2\delta+4E$ sufficiently large to apply the HHG distance formula, inequality~\eqref{hhsdf}. There are constants $A_s,B_s$ such that $\dist_G(1,g)\leq A_s\sum^\chi Q_i+B_s$, where $Q_i$ denotes $\sum_{\mfs_i}\ignore{\dist_U(1,g)}{s}$.

Consider a domain $U$ contributing to $Q_i$. If $U\not\in\{U_i,gU_i\}$ then by Lemma~\ref{productregions} we have $\dist_U(1,g)\leq2E+\dist_U(\rho^{U_i}_U,\rho^{gU_i}_U)\leq 2E+\diam(\rho^{U_i}_U\cup\rho^{gU_i}_U)$, and hence, by inequality \eqref{flatdistance}, $\dist_U(1,g)\leq \dist^\flat_U(\psi_i(1),\psi_i(g))+2E+\delta$, as any $\rho^V_W$ has diameter at most $E$.

If $U=U_i\neq gU_i$ then $\dist_U(1,g)\leq \dist_U(1,\rho^{gU_i}_U)+E = \dist^\flat_U(\psi_i(1),\psi_i(g))+E$, and similarly if $U=gU_i\neq U_i$. Finally, if $U=U_i=gU_i$, then $\dist_U(1,g)=\dist^\flat_U(\psi_i(1),\psi_i(g))$.

In any case, we have $\dist^\flat_U(\psi_i(1),\psi_i(g)) \geq s-\delta-2E>K'$, so $U$ contributes to the sum in the lower bound of inequality \eqref{bbfdf}. Moreover, $s-\delta-2E>\delta+2E$, so $2\dist^\flat_U(\psi_i(1),\psi_i(g))>\dist^\flat_U(\psi_i(1),\psi_i(g))+\delta+2E>\dist_U(1,g)$. By inequality \eqref{bbfdf}, we thus have 
\[
Q_i \leq 2\sum_{\mfs_i}\ignore{\dist^\flat_U(\psi(1),\psi(g))}{K'} \leq 4\dist_{\C\mfs_i}(\psi_i(1),\psi_i(g)).
\]
Summing over $i\in\{1,\dots,\chi\}$ gives the result.
\end{proof}

Our strategy for showing that $\Psi$ is quasimedian is to show that the $\psi_i$ send hierarchy paths (of a certain quality) in $G$ to unparametrised quasigeodesics in $\C\mfs_i$.

\begin{proposition}[Quasimedian] \label{psiqm}
There is a number $\kappa'=\kappa'(E,K,\chi)$ such that $\Psi$ is $\kappa'$--quasimedian.
\end{proposition}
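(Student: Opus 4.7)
The plan is to reduce quasimedianity of $\Psi$ to quasimedianity of each coordinate $\psi_i\colon G\to\C\mfs_i$ and then establish the latter using Proposition~\ref{hptoqg}. The target $\prod^\chi\C\mfs_i$ is a product of hyperbolic spaces (recall $\C\mfs_i$ is hyperbolic by \cite[Thm~4.17]{bbf}), so it inherits a coarse median structure in which the median is computed coordinate-wise. It therefore suffices to show that each $\psi_i$ is $\kappa_i$-quasimedian for some $\kappa_i=\kappa_i(E,K)$; summing the errors over the $\chi$ coordinates then yields the desired constant $\kappa'=\kappa'(E,K,\chi)$.

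To control $\psi_i$, fix $x,y,z\in G$ and let $m=m_G(x,y,z)$. Using the chosen constant $D>D_0$, pick $D$-hierarchy paths $\gamma_{xy},\gamma_{yz},\gamma_{xz}$ joining the corresponding pairs. Because the HHG coarse median $m$ projects uniformly close to the hyperbolic coarse median $m_{\C U}\bigl(\pi_U(x),\pi_U(y),\pi_U(z)\bigr)$ in every $\C U$ (see \cite[Thm~7.3]{HHS2}), and because a hierarchy path projects to an unparametrised quasigeodesic in each $\C U$, the point $m$ lies uniformly close in $G$ to each of $\gamma_{xy},\gamma_{yz},\gamma_{xz}$; the bound depends only on $E$ and $D$. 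Applying Proposition~\ref{hptoqg}, each image $\psi_i(\gamma_{\bullet\bullet})$ is an unparametrised $D'$-quasigeodesic in $\C\mfs_i$, with $D'=D'(E,K,D)$. Combining with the fact that $\psi_i$ is coarsely Lipschitz (Lemma~\ref{psicoarselipschitz}), we conclude that $\psi_i(m)$ lies within a uniformly bounded distance of each of the three quasigeodesics connecting the pairs in $\{\psi_i(x),\psi_i(y),\psi_i(z)\}$.

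Finally, in the hyperbolic space $\C\mfs_i$ any point that is uniformly close to all three pairwise quasigeodesics among a triple $a,b,c$ coarsely coincides with the hyperbolic coarse median $m_{\C\mfs_i}(a,b,c)$; this is the standard thin-triangle characterisation of the median in $\delta$-hyperbolic spaces. Applying this with $(a,b,c)=(\psi_i(x),\psi_i(y),\psi_i(z))$ shows that
\[
\dist_{\C\mfs_i}\bigl(\psi_i(m),\,m_{\C\mfs_i}(\psi_i(x),\psi_i(y),\psi_i(z))\bigr)\leq\kappa_i,
\]
with $\kappa_i$ depending only on $E$ and $K$. Taking a sum over $i\in\{1,\dots,\chi\}$ and comparing with the coordinate-wise median on $\prod^\chi\C\mfs_i$ gives the proposition.

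The main obstacle is really packaged into Proposition~\ref{hptoqg}, which is already established; everything else is a matter of combining the coarse median characterisation of $m$ on hierarchy paths with the hyperbolic median lemma, and tracking that every constant that enters depends only on $E$, $K$, and $\chi$. Alternatively, one may bypass the direct hyperbolic argument by invoking the characterisation from Russell--Spriano--Tran \cite{rusptr}, which converts the property ``sends hierarchy paths to unparametrised quasigeodesics'' directly into quasimedianity, but the hands-on route above avoids any additional machinery.
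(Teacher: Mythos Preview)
Your argument has a genuine gap at the step where you assert that $m$ lies uniformly close in $G$ to each of the chosen hierarchy paths $\gamma_{xy},\gamma_{yz},\gamma_{xz}$. Your justification is that $\pi_U(m)$ is close to $\pi_U(\gamma_{\bullet\bullet})$ in every $\C U$; but ``projects close in every domain'' only forces closeness in $G$ when the target set is hierarchically quasiconvex, and a single hierarchy path need not be. A concrete counterexample: in $\mathbf Z^2$ with the standard product HHG structure, take $x=(0,0)$, $y=(n,n)$, $z=(n,0)$. The median is $m=(n,0)$, yet the hierarchy path from $x$ to $y$ that first goes up and then right stays at distance $n$ from $m$. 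So an arbitrarily chosen $\gamma_{xy}$ can miss $m$ by an unbounded amount, and the coarsely Lipschitz step cannot salvage closeness of $\psi_i(m)$ to $\psi_i(\gamma_{xy})$.

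The paper fixes exactly this point, and in doing so explains the otherwise mysterious choice of $D$. Rather than choosing arbitrary $D_0$--hierarchy paths, it observes that $m\in\hthet(x_j,x_k)$ for a suitable $\theta=\theta(E)$, and then invokes \cite[Prop.~5.5]{rusptr} to obtain $D_1=D_1(\theta)$ such that $\hthet(x_j,x_k)$ is contained in the union of all $D_1$--hierarchy paths from $x_j$ to $x_k$. Taking $D\geq D_1$, one can then \emph{choose} $\gamma_{jk}$ to be a $D$--hierarchy path that actually passes through $m$. From that point on your argument is fine: $\psi_i(m)$ literally lies on each side $\psi_i\gamma_{jk}$, these are unparametrised quasigeodesics by Proposition~\ref{hptoqg}, and the thin-triangle characterisation of the median in $\C\mfs_i$ finishes the proof.
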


\begin{proof}
Since the $i^\mathrm{th}$ component of the (coarse) median of a triangle in $\prod^\chi\C\mfs_i$ is the (coarse) median of the $i^\mathrm{th}$ components of the vertices, it suffices to show that every $\psi_i$ is quasimedian. Let $m$ be the median in $G$ of points $x_1,x_2,x_3$. By construction, $\pi_U(m)$ is uniformly close, in terms of $E$, to any geodesic from $\pi_U(x_j)$ to $\pi_U(x_k)$, so there exists $\theta=\theta(E)$ such that $m\in\hthet(x_j,x_k)$. By \cite[Prop.~5.5]{rusptr}, there exists $D_1=D_1(\theta)$ such that $\hthet(x_j,x_k)$ is contained in the union of all $D_1$--hierarchy paths from $x_j$ to $x_k$. Let $D>\max\{D_1,100E,D_0,\Theta\}$, and let $\gamma_{jk}$ be a $D$--hierarchy path from $x_j$ to $x_k$ that contains $m$. 
%\begin{com} Because the earlier discussion of %constants didn't change, I left this and the paragraph %after the proof as they were. \end{com}

By Proposition~\ref{hptoqg}, the image of $m$ under $\psi_i$ is contained in each side of the $\mu$--quasigeodesic triangle with vertices $\psi_i(x_j)$ and sides $\psi_i\gamma_{jk}$, in the hyperbolic space $\C\mfs_i$. It follows that $\psi_i(m)$ is uniformly close to the median in $\C\mfs_i$ of $\psi(x_1),\psi_i(x_2),\psi_i(x_3)$. 
\end{proof}

We henceforth fix $D$ as in the proof of Proposition~\ref{psiqm}. Since $D$ is determined by $E$ and $K$, we view dependence on $D$ merely as dependence on $E$ and $K$. Recall that $K>101D$. Perhaps a better way to think of this is that $K$ is chosen sufficiently large in terms of $E$, and then $D$ can take any value between $\max\{D_1,100E,D_0,\Theta\}$ and $\frac{K}{101}$.

\begin{proposition}[Hierarchy paths $\to$ quasigeodesics] \label{hptoqg}
There is a constant $\mu=\mu(E,K)$ such that the image of any $D$--hierarchy path $\gamma:\{0,1,\dots,T\}\rightarrow G$ under $\psi_i$ is an unparametrised $\mu$--quasigeodesic in the hyperbolic space $\C\mathfrak S_i$.
\end{proposition}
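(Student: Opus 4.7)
The plan is to verify that for every $s \le s' \le s''$ in $\{0,\dots,T\}$, the point $\psi_i\gamma(s')$ lies within a uniform distance of every geodesic in $\C\mfs_i$ from $\psi_i\gamma(s)$ to $\psi_i\gamma(s'')$. Together with coarse Lipschitzness (Lemma~\ref{psicoarselipschitz}) and hyperbolicity of $\C\mfs_i$ (with constants depending only on $E$ and $K$, by \cite[Thm~4.17]{bbf}), this will yield the unparametrised quasigeodesic conclusion: closeness to a geodesic at each intermediate point gives the bounded backtracking inequality
\[
\dist_{\C\mfs_i}(\psi_i\gamma(s),\psi_i\gamma(s')) + \dist_{\C\mfs_i}(\psi_i\gamma(s'),\psi_i\gamma(s'')) \le \dist_{\C\mfs_i}(\psi_i\gamma(s),\psi_i\gamma(s'')) + O(1),
\]
which combined with coarse Lipschitzness suffices, in a hyperbolic space, to supply a reparametrisation making $\psi_i\gamma$ a quasigeodesic.

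Fix $s\le s'\le s''$ and set $V=\gamma(s')U_i\in\mfs_i$, so that by equivariance $\psi_i\gamma(s')=\pi_V(\gamma(s'))\in\C V$; write $p=\pi^\flat_V\psi_i\gamma(s)$ and $q=\pi^\flat_V\psi_i\gamma(s'')$. Lemma~\ref{bbfnearpi} gives that $p$ and $q$ are $E$-close in $\C V$ to $\pi_V\gamma(s)$ and $\pi_V\gamma(s'')$, respectively. Since $\gamma$ is a $D$-hierarchy path and $\C V$ is $E$-hyperbolic, $\pi_V\gamma$ is an unparametrised $D$-quasigeodesic, so the Morse lemma places $\psi_i\gamma(s')$ within a uniform distance (depending only on $D$ and $E$) of the geodesic $[p,q]$ in $\C V$. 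To transfer this bound to $\C\mfs_i$, I would invoke that $\C V$ is isometrically embedded and totally geodesic (hence quasiconvex) in $\C\mfs_i$ by \cite[Thm~A]{bbf}, and that $\pi^\flat_V$ coarsely agrees with the closest-point projection onto $\C V$ by \cite[Cor~4.10]{bbf}. The standard hyperbolic fact that any geodesic between two points passes within bounded distance of the closest-point projections of those points onto a quasiconvex subspace then says the geodesic from $\psi_i\gamma(s)$ to $\psi_i\gamma(s'')$ in $\C\mfs_i$ passes uniformly close to both $p$ and $q$, and therefore to $[p,q]$; concatenating with the previous bound places $\psi_i\gamma(s')$ uniformly close to that geodesic.

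The main subtlety is the regime where $p\approx q$ in $\C V$, i.e.\ when $V$ is not ``relevant'' for $\gamma(s),\gamma(s'')$. There $[p,q]$ essentially degenerates to a point; but the quasigeodesic property of $\pi_V\gamma$ still forces it to have bounded diameter between $\pi_V\gamma(s)$ and $\pi_V\gamma(s'')$, so $\psi_i\gamma(s')$ remains close to $p\approx q$, and the same quasiconvex-projection fact (which is equally valid when the two projections coincide, since it is really a statement that the Gromov product $(\psi_i\gamma(s),\psi_i\gamma(s''))_p$ is uniformly bounded) still places this common projection uniformly close to any geodesic in $\C\mfs_i$ from $\psi_i\gamma(s)$ to $\psi_i\gamma(s'')$. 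Hence the conclusion goes through in all cases.
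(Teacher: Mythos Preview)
Your strategy---prove the ``no backtracking'' inequality by projecting onto $\C V$ with $V=\gamma(s')U_i$---is natural, and the case where $p$ and $q$ are far apart in $\C V$ is fine: there the standard projection lemma does force any geodesic $[\psi_i\gamma(s),\psi_i\gamma(s'')]$ to pass near both $p$ and $q$, and hence near $\psi_i\gamma(s')$.

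The gap is in the degenerate case $p\approx q$. The ``standard hyperbolic fact'' you invoke is \emph{not} valid there: it says that if $Z$ is quasiconvex and the closest-point projections of $a,b$ to $Z$ are far apart, then $[a,b]$ passes near those projections. When the projections coincide it says nothing---there is no general bound on $(a,b)_p$. Concretely, in a tree take $Z$ to be a leaf $z$ hanging off an interior vertex $m$, and take $a,b$ in different branches meeting well before $m$; then $p=q=z$, yet $[a,b]$ can be arbitrarily far from $z$. Translated to $\C\mfs_i$: nothing you have written rules out that $\C V$ sits on a side-branch of the quasi-tree, with the geodesic from $\psi_i\gamma(s)$ to $\psi_i\gamma(s'')$ passing through some other $\C W$ and never approaching $\C V$. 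Knowing that $\psi_i\gamma(s')\in\C V$ is close to $p\approx q$ only tells you it is close to the nearest point of $\C V$ to the endpoints; it does not tell you that nearest point is close to the geodesic.

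What is actually needed in this regime is an argument, specific to the BBF structure, that the geodesic $[\psi_i\gamma(s),\psi_i\gamma(s'')]$ must enter a bounded neighbourhood of $\C V$---equivalently, that $V$ lies ``between'' $\gamma(s)U_i$ and $\gamma(s'')U_i$ in the quasi-tree. This is not automatic and is essentially what the paper works to establish. The paper takes a different route altogether: it decomposes $\gamma$ according to the totally ordered set of $M$--relevant domains in $\mfs_i$, shows (Lemmas~\ref{closeonbigguys} and~\ref{alphafellow}) that each piece of $\psi_i\gamma$ is a uniform quasigeodesic, bounds the overlaps (Lemma~\ref{smalloverlaps}), and then applies a local-to-global recognition lemma for quasigeodesics in hyperbolic spaces (Lemma~\ref{hwlocalgeodesic}). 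Your approach could perhaps be completed by appealing directly to the BBF distance formula and the fact that every $\pi^\flat_W\psi_i\gamma$ is an unparametrised quasigeodesic, but that reduction is nontrivial (one has to contend with the mismatched thresholds $K$ and $K'$ in~\eqref{bbfdf}) and is not what you have written.
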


The proof of Proposition~\ref{hptoqg} occupies the remainder of this section. Since $G$ is a group and the $\psi_i$ are $G$--equivariant, we may assume that $\gamma(0)=1$, and we write $\gamma(T)=g$. Since $\gamma$ is a $D$--hierarchy path, $\pi_U\gamma$ is a $D$--quasigeodesic, so if $U\in\mfs_i\cap\rel_{100D}(1,g)$ then there exists a minimal $a_U\in\{0,\dots,T\}$ with $\dist_U(\gamma(a_U),1)\geq2D$, and a maximal $b_U\in\{0,\dots,T\}$ with $\dist_U(\gamma(b_U),g)\geq2D$. Moreover, both $\pi_U\gamma|_{[0,a_U]}$ and $\pi_U\gamma|_{[b_U,T]}$ are $10D$--coarsely constant, and since $D>100E$, consistency ensures that $b_V<a_U$ whenever $V<U$. (Recall that $V<U$ if $\dist_V(g,\rho^U_V)\leq E$, in which case $\dist_V(\gamma(b_V),\rho^U_V)\geq2D-E$.) Write $x_U=\gamma(a_U)$, $y_U=\gamma(b_U)$.  By $\gamma_U$ we mean the subpath $\gamma|_{[a_U,b_U]}$.

\begin{lemma} \label{closeonbigguys}
For all $U\in\rel_{100D}(1,g)$, if $x\in\gamma_U$ then $\dist_{\C\mfs_i}(\psi_i(x),\pi_U(x))\leq6K$. In particular, $\psi_i\gamma_U\subset\N_{6K}(\C U)$, and $\psi_i\gamma_U$ is an unparametrised $D'$--quasigeodesic from $\psi_i(x_U)$ to $\psi_i(y_U)$, where $D'=D'(E,K)$.
\end{lemma}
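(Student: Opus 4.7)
The strategy is to apply the BBF distance formula \eqref{bbfdf} to $\psi_i(x)$ and $\pi_U(x)$: if $\dist^\flat_V(\psi_i(x),\pi_U(x))<K$ for every $V\in\mfs_i$, the sum vanishes and the inequality $\dist_{\C\mfs_i}(\psi_i(x),\pi_U(x))\leq 6K$ follows at once. By equivariance, $\psi_i(x)=\pi_{xU_i}(x)$ lies in $\C(xU_i)$, so \eqref{flatdistance} together with the definition of $\pi^\flat_V$ bounds each $\dist^\flat_V$ by a diameter of $\rho$--sets (or, if $V$ coincides with $U$ or $xU_i$, by such a set together with a projection of $x$). A short case analysis on whether $V\in\{U,xU_i\}$, using Lemma~\ref{productregions} to control $\rho^{xU_i}_V$, reduces the task to bounding $\dist_V(x,\rho^U_V)$ for every $V\in\mfs_i\setminus\{U\}$.

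The main obstacle is this bound. Since $U$ is $100D$--relevant, $\dist_U(1,g)>100D$, so consistency at $1$ and at $g$ applied to the transverse pair $U,V$ cannot simultaneously place $\rho^V_U$ within $E$ of both $\pi_U(1)$ and $\pi_U(g)$; hence at least one of $\dist_V(1,\rho^U_V)$ and $\dist_V(g,\rho^U_V)$ is at most $E$. If $V$ is $100D$--relevant, then $V$ is transversely ordered with respect to $U$, and the $10D$--coarsely-constant behaviour of $\pi_V\gamma$ on $[0,a_V]$ or $[b_V,T]$ together with the order relation yields $\dist_V(x,\rho^U_V)\leq 10D+E$. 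If $V$ is not $100D$--relevant, then $\dist_V(1,g)\leq 100D$, and the Morse lemma applied to the unparametrised $D$--quasigeodesic $\pi_V\gamma$ in the $E$--hyperbolic space $\C V$ bounds its image diameter in terms of $E$ and $D$, yielding a bound on $\dist_V(x,\rho^U_V)$ depending only on $E$ and $D$. Choosing $K$ sufficiently large then makes every $\dist^\flat_V<K$, and \eqref{bbfdf} delivers $\dist_{\C\mfs_i}(\psi_i(x),\pi_U(x))\leq 6K$.

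For the ``In particular'' statements, $\psi_i\gamma_U\subset\N_{6K}(\C U)$ is immediate because $\pi_U(x)\in\C U$. The quasigeodesic claim follows because $\pi_U\gamma_U$ is an unparametrised $D$--quasigeodesic in $\C U$, which is isometrically embedded in $\C\mfs_i$ by \cite[Thm~A]{bbf}; since $\psi_i\gamma_U$ fellow-travels this path within distance $6K$ at every point, a standard argument shows that $\psi_i\gamma_U$ is itself an unparametrised $D'$--quasigeodesic from $\psi_i(x_U)$ to $\psi_i(y_U)$ with $D'=D'(E,K)$.
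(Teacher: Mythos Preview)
Your argument is correct but follows a different route from the paper's. The paper argues by contradiction: assuming $\dist_{\C\mfs_i}(\psi_i(x),\pi_U(x))>6K$, inequality~\eqref{bbfdf} produces a single domain $W$ with $\dist^\flat_W\geq K$. After disposing of the easy cases, the paper uses consistency in the \emph{reverse} direction from yours: since $\dist_W(x,\rho^U_W)\geq K-3E$, consistency forces $\dist_U(\rho^W_U,x)\leq E$; and since $x\in\gamma_U$ means $\pi_U(x)$ is at least $2D$--far from both $\pi_U(1)$ and $\pi_U(g)$, the point $\rho^W_U$ is far from both as well, so consistency (now applied to $1$ and to $g$) puts \emph{both} $\pi_W(1)$ and $\pi_W(g)$ within $E$ of $\rho^U_W$. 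The contradiction is then that the unparametrised $D$--quasigeodesic $\pi_W\gamma$ has endpoints only $3E$ apart yet passes through a point $(K-4E)$--far from both.

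The difference is structural: you establish a universal bound $\dist_V(x,\rho^U_V)\leq C(E,D)$ for every $V$ by splitting on relevance of $V$, using only that \emph{one} endpoint is close to $\rho^U_V$; the paper works with the single offending $W$ and exploits the position of $x$ inside $\gamma_U$ to force \emph{both} endpoints close in $\C W$. Your Case~2 bound involves the Morse constant for a quasigeodesic with endpoints $100D$ apart, while the paper only needs it for endpoints $3E$ apart, so the paper's argument demands less of $K$. In practice both proofs need $K$ large relative to a Morse-type constant depending on $D$ and $E$, and since the setup permits $K$ to be chosen sufficiently large in terms of $E$, your ``choosing $K$ sufficiently large'' is legitimate---it just asks for a bigger $K_0$ than the paper's stated $101D$. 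Your approach is more direct and avoids the back-and-forth consistency step, at the cost of a slightly coarser constant.
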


\begin{figure}[h]
\includegraphics[width=10cm]{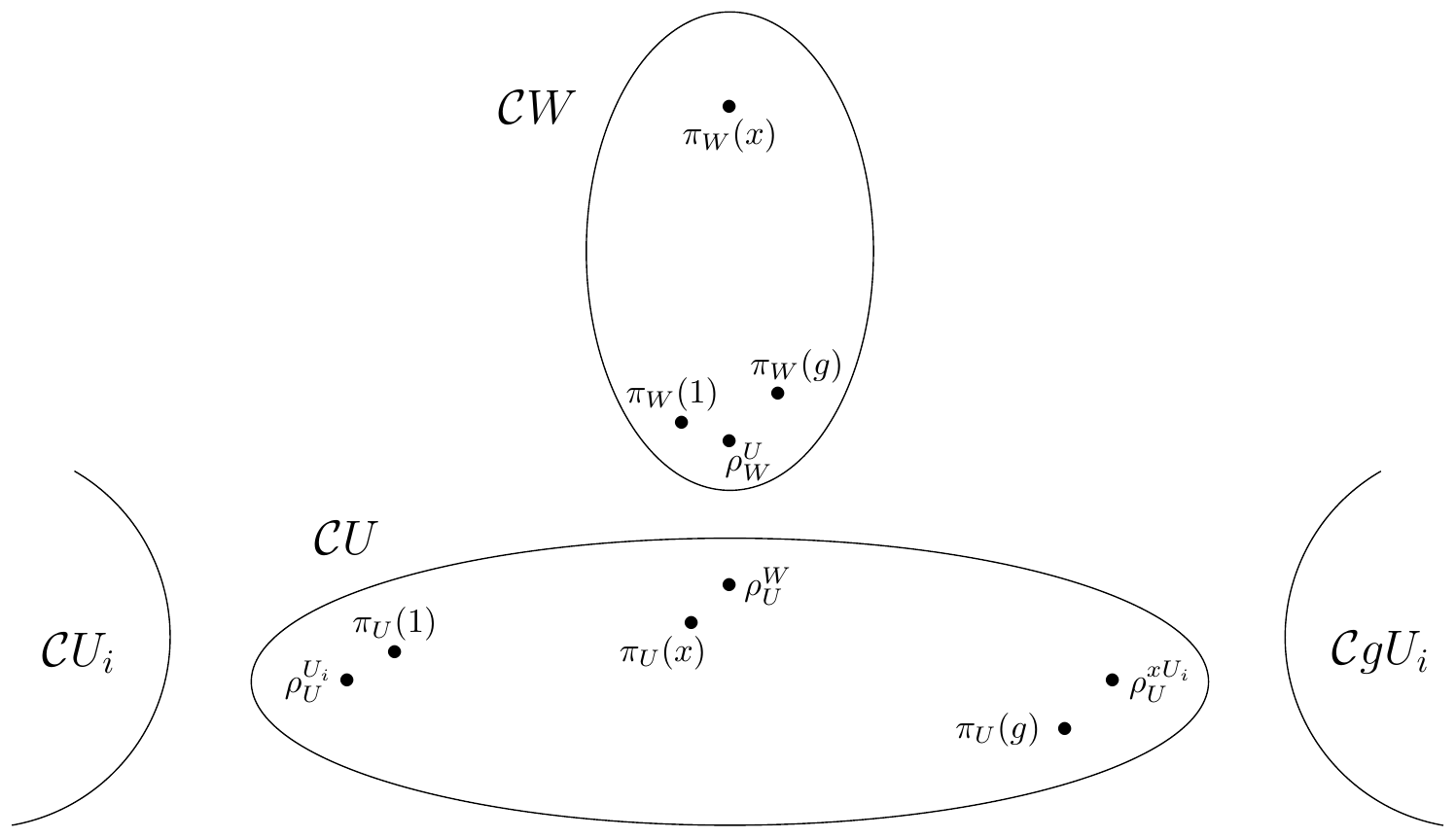}\centering
\caption{Heuristic picture of the proof of Proposition~\ref{closeonbigguys}. The path $\pi_W\gamma$ is a $D$--quasigeodesic.}
\end{figure}

\begin{proof}
By inequality \eqref{bbfdf}, if $\dist_{\C\mfs_i}(\psi_i(x),\pi_U(x))>6K$ then there is some domain $W\in\mfs_i$ such that $\dist^\flat_W(\psi_i(x),\pi_U(x))\geq K$. If $W=U$ then $\dist_W(\pi^\flat_W\psi_i(x),\pi_W(x))\geq K-\delta-2E$ by inequality \eqref{flatdistance}, contradicting Lemma~\ref{bbfnearpi}. We also cannot have $U=xU_i$, for then $\psi_i(x)$ would be equal to $\pi_U(x)$ by definition. If $W\not\in\{U,xU_i\}$ then we have $\diam(\rho^U_W\cup\rho^{xU_i}_W)\geq K$ by inequality \eqref{flatdistance}, so by Lemma~\ref{productregions} we have $\dist_W(\rho^U_W,x)\geq K-3E$. By consistency, we thus have $\dist_U(\rho^W_U,x)\leq E$ in any case where $W\neq U$. 

By definition of $x_U$ and $y_U$, it follows that both $\pi_U(1)$ and $\pi_U(g)$ are $E$--far from $\rho^W_U$, so both $\pi_W(1)$ and $\pi_W(g)$ are $E$--close to $\rho^U_W$. In particular, they are both $(K-4E)$--far from $\pi_W(x)$, which contradicts the fact that $\pi_W\gamma$ is an unparametrised $D$--quasigeodesic.
\end{proof}

For a constant $M>K$, enumerate $\rel_M(1,g)\cap\mfs_i=\{U_2,U_4,\dots,U_n\}$ according to the total order. We abbreviate $a_{U_j}$ to $a_j$ and $\rho^{U_{j-1}}_{U_{j+1}}$ to $\rho^{j-1}_{j+1}$ etc. For odd $j$, let $\alpha_j=\gamma|_{[b_{j-1},a_{j+1}]}$, where we take $b_0=0$ and $a_{n+2}=T$. 

\begin{lemma} \label{alphafellow}
Each path $\psi_i\alpha_j$ is an unparametrised quasigeodesic with constant independent of $M$.
\end{lemma}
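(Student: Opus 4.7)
The plan is to verify the three-point monotonicity criterion for unparametrised quasigeodesics: for some $\nu=\nu(E,K)$ and any $x,y,z\in\alpha_j$ listed in the order they occur along $\gamma$,
\[
\dist_{\C\mfs_i}(\psi_i(x),\psi_i(y))+\dist_{\C\mfs_i}(\psi_i(y),\psi_i(z))\leq\nu\,\dist_{\C\mfs_i}(\psi_i(x),\psi_i(z))+\nu.
\]
Combined with the fact that $\psi_i$ is coarsely Lipschitz (Lemma~\ref{psicoarselipschitz}), this three-point inequality is precisely the property needed.

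First I would unwind the three distances in $\C\mfs_i$ via the BBF distance formula~\eqref{bbfdf}: upper-bound the left-hand side in terms of $\sum_{W\in\mfs_i}\ignore{\dist^\flat_W(\psi_i(\cdot),\psi_i(\cdot))}{K}$ and lower-bound the right in terms of $\sum_{W\in\mfs_i}\ignore{\dist^\flat_W(\psi_i(x),\psi_i(z))}{K'}$. Using Lemma~\ref{bbfnearpi} and~\eqref{flatdistance}, each $\dist^\flat_W(\psi_i(\cdot),\psi_i(\cdot))$ can be replaced by $\dist_W(\cdot,\cdot)$ on $G$ at the cost of an additive constant depending only on $E$. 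Next, since $\gamma$ is a $D$-hierarchy path, for each $W\in\mfs_i$ the projection $\pi_W\gamma$ is an unparametrised $D$-quasigeodesic in the hyperbolic space $\C W$, so $\pi_W(y)$ lies within $C_1=C_1(E,K)$ of any geodesic $[\pi_W(x),\pi_W(z)]$, yielding the per-domain coarse additivity
\[
\dist_W(x,y)+\dist_W(y,z)\leq\dist_W(x,z)+C_1.
\]
The observation making the bound independent of $M$ is that any $M$-relevant domain $U_k\in\mfs_i$ satisfies $\dist_{U_k}(x,z)\leq 10D<K$, because $\alpha_j$ lies in the coarsely-constant tail of $\pi_{U_k}\gamma$ outside $\gamma_{U_k}$; consequently the $U_k$ never enter either BBF sum, and no constant produced in the argument can depend on how many of them there are.

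The main obstacle is the threshold mismatch $K<K'$ in~\eqref{bbfdf}: a domain $W$ may contribute to the left-hand sum while $\dist^\flat_W(\psi_i(x),\psi_i(z))<K'$, causing it to drop out of the right-hand sum. The per-domain coarse additivity forces any such ``borderline'' $W$ to have $\dist^\flat_W(\psi_i(x),\psi_i(z))\in[K-C_1,K')$, so each one contributes at most $O_{E,K}(1)$ to the upper sum. I would control their aggregate contribution by reapplying the BBF upper bound to $\dist_{\C\mfs_i}(\psi_i(x),\psi_i(z))$ itself, viewing each borderline $W$ as a vertex space that the BBF geodesic from $\psi_i(x)$ to $\psi_i(z)$ must traverse; the count of such $W$ is therefore bounded by $\dist_{\C\mfs_i}(\psi_i(x),\psi_i(z))/K$ up to constants. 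The resulting total contribution absorbs into the multiplicative factor $\nu-1$ in the target inequality, finishing the proof with constants depending only on $E$ and $K$.
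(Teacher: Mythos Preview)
Your approach is genuinely different from the paper's.  The paper argues geometrically: it fixes a geodesic $\hat\alpha_j$ in $\C\mfs_i$ between the endpoints of $\psi_i\alpha_j$ and shows that $\psi_i\alpha_j$ fellow-travels $\hat\alpha_j$ with coarsely monotone nearest-point projection.  The decomposition is into the subpaths $\psi_i\gamma_U$ for the $100D$--relevant $U\in\mfs_i$ lying strictly between $U_{j-1}$ and $U_{j+1}$ (each handled by Lemma~\ref{closeonbigguys}, and pinned to $\hat\alpha_j$ via \cite[Thm~4.11]{bbf}), together with complementary pieces whose $\psi_i$--image has diameter at most $6K$.  You instead try to prove a three-point monotonicity inequality directly from the BBF distance formula and per-domain additivity.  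That is a more combinatorial route, and the observation that the $M$--relevant $U_k$ never contribute is correct and is what makes the constants $M$--independent.

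The gap is in your treatment of the threshold mismatch.  Your count of the borderline domains cannot be extracted from~\eqref{bbfdf} as you suggest.  The upper bound in~\eqref{bbfdf} is an upper bound on the \emph{distance}, hence a lower bound on $\sum_W\ignore{\dist^\flat_W}{K}$; it says nothing about how many terms the sum has.  The lower bound in~\eqref{bbfdf} does bound the number of terms, but only those with $\dist^\flat_W\ge K'$.  For a borderline $W$ you only know $\dist^\flat_W(\psi_i(x),\psi_i(z))\ge K-C$ for a constant $C=C(E,D)$ coming from Lemma~\ref{bbfnearpi}, \eqref{flatdistance}, and the quasigeodesic additivity defect; this threshold is strictly below $K$, so neither side of~\eqref{bbfdf} controls $|S_2|$.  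Nor does your phrase ``the BBF geodesic must traverse'' help: the relevant BBF traversal statements (e.g.\ \cite[Thm~4.11]{bbf}) have their own threshold, and you have not checked that $K-C$ exceeds it.  One could repair this by invoking the tighter distance formula of \cite{bbfs} with matching thresholds (which the paper explicitly mentions but declines to use), or by appealing directly to a BBF structural result at a threshold below $K$ --- but the latter is essentially the paper's own argument, at which point the distance-formula shortcut no longer buys anything.
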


\begin{proof}
Let $\hat\alpha_j$ be a geodesic in $\C\mfs_i$ from $\psi_i(y_{j-1})$ to $\psi_i(x_{j+1})$. It suffices to show that $\psi_i\alpha_j$ fellow-travels $\hat\alpha_j$ with constant independent of $M$. 

Our choice of $D$ (large in terms of $E$ and $\Theta$) allows us to invoke \cite[Thm~4.11]{bbf} to get a constant $\mu_1=\mu_1(E,K)$ such that if $U\in\rel_{100D}(1,g)\cap\mfs_i$ and $U_{j-1}<U<U_{j+1}$, then $\hat\alpha_j$ comes $\mu_1$--close to $\pi^\flat_U\psi_i(y_{j-1})$ and $\pi^\flat_U\psi_i(x_{j+1})$. 

By \cite[Cor.~4.10]{bbf}, the map $\pi^\flat_U$ is coarsely Lipschitz, with constants depending only on $E$ and $K$. Since $\pi^\flat_U\pi_{U_{j-1}}(y_{j-1})=\rho^{{j-1}}_U$, this bounds $\dist_{\C\mfs_i}(\pi^\flat_U\psi_i(y_{j-1}),\rho^{{j-1}}_U)$ in terms of $\dist_{\C\mfs_i}(\psi_i(y_{j-1}),\pi_{U_{j-1}}(y_{j-1}))$, which in turn is bounded by Lemma~\ref{closeonbigguys}. But, by Lemma~\ref{closeonbigguys} and the definition of $x_U$, the set $\rho^{j-1}_U$ is uniformly close to $\psi_i(x_U)$ in terms of $E$ and $K$. Thus $\pi^\flat_U\psi_i(y_{j-1})$ lies at uniformly bounded distance from $\psi_i(x_U)$, and similarly $\pi^\flat_U\psi_i(x_{j+1})$ lies at uniform distance from $\psi_i(y_U)$.

This shows that $\hat\alpha_j$ comes uniformly close to both $\psi_i(x_U)$ and $\psi_i(y_U)$. According to Lemma~\ref{closeonbigguys}, it follows that the path $\psi_i\alpha_j|_{[a_U,b_U]}$ fellow-travels a subgeodesic of $\hat\alpha_j$ with constant depending only on $E$ and $K$. By the definition of total ordering on $\rel_{100D}(1,g)\cap\mathfrak S_i$, there is a bound on the overlap between any two of these subpaths. To complete the proof, it suffices to show that their complement is bounded.

Now suppose that $x$ and $y$ are points on $\gamma$ with $\mfs_i\cap\rel_{100D}(x,y)=\varnothing$. Inequality~\eqref{bbfdf} and Lemma~\ref{bbfnearpi} tell us that $\dist_{\C\mfs_i}(\psi_i(x),\psi_i(y))$ is bounded by $6K$, because $K>101D$. Thus each component of the complement of the $[a_U,b_U]$ in $[0,T]$ is mapped by $\psi_i\gamma$ to a set of diameter at most $6K$ inside $\C\mfs_i$.

We have shown that $\psi_i\alpha_j$ lies at Hausdorff distance from $\hat\alpha_j$ bounded in terms of $E,D,K$ only.  Moreover, we have shown that the closest point projection from $\psi_i\alpha_j$ to $\hat\alpha_j$ is coarsely monotone since the above subpaths of $\hat\alpha_j$ have bounded overlaps.
%\begin{com} Fleshed out this paragraph.  If referee gets confused: (1) point out that the last part of the argument is being done for arbitrary $x,y\in\gamma$, in particular, it's not just that the "irrelevant subpaths" have small distance between their endpoints, they are actually short, and hence fellow-travel their approximating geodesics. (2) maybe we will be forced to put a picture.\end{com} 
\end{proof}

Our goal is now to apply to $\psi_i\gamma$ a standard lemma for recognising when a piecewise-quasigeodesic in a hyperbolic space is a quasigeodesic:

\begin{lemma}[{\cite[Lem.~4.3]{hagenwise}}]\label{hwlocalgeodesic}
For any numbers $\delta$, $\lambda_1$, $\lambda_2$ and any function $h:\R\to\R$ there is a constant $L_0$ such that the following holds. Suppose that $Z$ is a $\delta$--hyperbolic space and $P=\alpha_1\gamma_2\alpha_3\dots\gamma_{n-1}\alpha_n\subset Z$ is a path such that each $\gamma_i$ is a $\lambda_1$--quasigeodesic and each $\alpha_i$ is a $\lambda_2$--quasigeodesic. Suppose further that for any positive $r$, each of the sets 
\[
\N_{3\delta+r}(\gamma_i)\cap\gamma_{i\pm2} \hspace{5mm} \mathrm{and} \hspace{5mm} \N_{3\delta+r}(\gamma_i)\cap\alpha_{i\pm1}
\]
has diameter at most $h(r)$. If every $\gamma_i$ has length $|\gamma_i|\geq L_0$, then the distance between the endpoints of $P$ is at least $\frac{1}{4\lambda_1\lambda_2}|P|-\lambda_2$. 
\end{lemma}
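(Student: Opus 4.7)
The plan is to argue by contradiction: suppose the endpoints $p,q$ of $P$ satisfy $\dist(p,q) < \frac{|P|}{4\lambda_1\lambda_2} - \lambda_2$, and derive a contradiction by analyzing a geodesic $\sigma \subset Z$ from $p$ to $q$. I would fix $L_0$ large in terms of $\delta, \lambda_1, \lambda_2$, and several evaluations of $h$, the precise value being pinned down only at the end. By the Morse lemma in a $\delta$--hyperbolic space, each $\lambda_1$--quasigeodesic $\gamma_i$ of length at least $L_0$ lies at Hausdorff distance at most some $C_1 = C_1(\delta,\lambda_1)$ from the geodesic $\eta_i$ joining its endpoints $x_i, y_i$, with $\dist(x_i,y_i) \geq L_0/\lambda_1 - \lambda_1$, which is as large as we need.

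The key step is a tracking statement: for each $i$ (even), a definite subsegment $\sigma_i \subset \sigma$ lies at Hausdorff distance at most some $C_2 = C_2(\delta,\lambda_1,\lambda_2,h)$ from $\gamma_i$. To prove this, consider the geodesic quadrilateral with vertices $p, x_i, y_i, q$ and sides $\tau_i^-$ (geodesic $p\to x_i$), $\eta_i$, $\tau_i^+$ (geodesic $y_i\to q$), and $\sigma$. By $\delta$--hyperbolicity the quadrilateral is $2\delta$--thin, so each point of $\eta_i$ is within $2\delta$ of one of the other three sides. The escape side $\tau_i^-$ is then compared to the actual sub-path $\alpha_1\gamma_2\cdots\alpha_{i-1}$ of $P$ by iterated thin-triangle arguments, so any point of $\eta_i$ close to $\tau_i^-$ is also close to some $\alpha_j$ or $\gamma_k$ with $k < i$; using the overlap hypothesis on the pairs $(\gamma_i,\gamma_{i-2})$ and $(\gamma_i,\alpha_{i\pm 1})$ (applied after passing from $\eta_i$ back to $\gamma_i$ via Morse), this portion has diameter bounded by a constant built from $h(3\delta + C_1 + \text{const})$. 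The same bound applies to $\tau_i^+$, so once $L_0$ exceeds twice this bound plus $2C_1$, a definite arc of $\eta_i$ must be $2\delta$--close to $\sigma$, and Morse then provides the desired $\sigma_i$.

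Next I would use the overlap hypothesis a second time to control how the $\sigma_i$ fit together along $\sigma$. If $\sigma_i$ and $\sigma_{i+2}$ overlapped in $\sigma$ by more than a constant, then via the Morse bound $C_1$ and the tracking bound $C_2$, a long portion of $\gamma_{i+2}$ would lie in the $(3\delta + 2C_2 + 2C_1)$--neighbourhood of $\gamma_i$, contradicting the hypothesis once $L_0$ exceeds $h(2C_2 + 2C_1)$; analogously, each $\alpha_j$ is constrained by its overlap bound with $\gamma_{j\pm 1}$. Summing, the $\sigma_i$ occur in the correct order along $\sigma$ with controlled overlaps, so $\dist(p,q) \geq \sum_i |\sigma_i| - \text{error}$, while $|\sigma_i| \geq |\gamma_i|/\lambda_1 - \text{const}$. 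On the other hand $|P| = \sum_i |\gamma_i| + \sum_j |\alpha_j|$, and the $\alpha_j$ contribution is controlled by $\lambda_2\bigl(\dist(p,q) + \lambda_2\bigr)$ since each $\alpha_j$ is a $\lambda_2$--quasigeodesic whose endpoints are close to $\sigma$. Combining these estimates and choosing $L_0$ large enough that the linear-in-$|\gamma_i|$ terms dominate the additive errors yields $\dist(p,q) \geq |P|/(4\lambda_1\lambda_2) - \lambda_2$, contradicting the assumption.

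The main obstacle, as I see it, is the tracking step: showing that an arc of $\eta_i$ must lie close to $\sigma$ rather than to $\tau_i^-$ or $\tau_i^+$. The overlap hypothesis is only available as a statement about pairs of pieces of $P$, so one must compare the abstract geodesic $\tau_i^-$ back to the concrete sub-path of $P$ using nested thin-triangle arguments, carefully tracking how the radii in $h$ grow with the number of comparisons. Pinning down the correct order in which to fix $C_1, C_2,$ and the successive evaluations of $h$, and then choosing $L_0$ strictly larger than every resulting threshold, is the delicate bookkeeping at the heart of the lemma.
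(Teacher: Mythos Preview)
The paper does not actually prove this lemma; it is quoted from \cite[Lem.~4.3]{hagenwise} and used as a black box. The only additional information the paper supplies is the remark immediately following the statement: the constant $L_0$ depends on $h$ only through the single evaluation $h(\delta+3\lambda)$, where $\lambda$ is a Morse constant for $\lambda_1$-- and $\lambda_2$--quasigeodesics. So there is no in-paper proof to compare your proposal against.

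That said, the remark does bear on your sketch. Your outline is a reasonable local-to-global scheme, but the worry you raise at the end is a genuine one for the approach as written. Comparing the abstract geodesic $\tau_i^-$ (which runs from $p$ all the way to $x_i$) back to the concrete sub-path $\alpha_1\gamma_2\cdots\alpha_{i-1}$ by ``iterated thin-triangle arguments'' risks making the radii at which you evaluate $h$ grow with $i$, and then no single $L_0$ independent of $n$ would work. The paper's remark tells you the cited proof avoids this entirely: only one evaluation of $h$ is needed. The standard way to achieve that is to argue more locally --- for instance, by showing directly that each concatenation $\gamma_i\alpha_{i+1}\gamma_{i+2}$ makes definite progress via a single thin-polygon argument involving only the overlap bounds for adjacent pieces, and then appealing to a local-to-global principle for quasigeodesics --- rather than comparing each $\gamma_i$ to a geodesic reaching all the way back to $p$.
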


In the preceding lemma, one can compute $L_0$ explicitly, although we do not need this.  Specifically, if $\lambda$ is such that any $\lambda_i$--quasigeodesic ($i=1,2$) lies $\lambda$--close to the geodesic joining its endpoints, then the required value of $L_0$ depends on $h$ only in the sense that it depends on $h(\delta+3\lambda)$ --- see the proof of~\cite[Lem. 4.3]{hagenwise}.

\begin{lemma} \label{smalloverlaps}
There is a function $h'$, depending on $E$ and $K$, such that for all positive $r$, the diameters of the sets 
\[
\N_r(\psi_i\gamma_j)\cap\N_r(\psi_i\gamma_{j\pm2}) \hspace{5mm} \mathrm{and} \hspace{5mm} \alpha_j\cap\N_r(\psi_i\gamma_{j\pm1}) 
\]
are bounded by $h'(r)$.
\end{lemma}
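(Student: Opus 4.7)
The overall approach is to exploit the coarsely Lipschitz projections $\pi^\flat_V$ onto the isometrically embedded, totally geodesic subspaces $\C V\subset\C\mfs_i$ provided by \cite[Cor.~4.10]{bbf}, together with the fact from Lemma~\ref{closeonbigguys} that each $\psi_i\gamma_j$ lies in the $6K$--neighbourhood of $\C U_j$. The idea is that for any pair of points $p,q\in\C\mfs_i$ that are $(6K+r)$--close to some $\C V$, we have $\dist_{\C\mfs_i}(p,q)\leq\dist_{\C V}(\pi^\flat_V(p),\pi^\flat_V(q))+O(K+r)$, using that $\pi^\flat_V$ coarsely agrees with closest point projection to $\C V$; so in each case it suffices to argue that the set we intersect with has $\pi^\flat_V$--image of bounded diameter.

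For the first set, fix $p,q\in\N_r(\psi_i\gamma_j)\cap\N_r(\psi_i\gamma_{j+2})$ and set $V=U_j$. The ordering $U_j<U_{j+2}$ yields $b_{U_j}<a_{U_{j+2}}$, so along $\gamma_{j+2}=\gamma|_{[a_{j+2},b_{j+2}]}$ the path $\pi_{U_j}\gamma$ is $10D$--coarsely constant near $\pi_{U_j}(g)$, which by definition of the ordering is $E$--close to $\rho^{U_{j+2}}_{U_j}$. Combined with Lemma~\ref{bbfnearpi}, this bounds the diameter of $\pi^\flat_{U_j}(\psi_i\gamma_{j+2})$ in terms of $E$, $D$ and $K$. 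Since $\pi^\flat_{U_j}$ is coarsely Lipschitz and $p,q$ lie in $\N_r(\psi_i\gamma_{j+2})$, their projections land in an $O(r)$--enlargement of this bounded set, so $\dist_{\C U_j}(\pi^\flat_{U_j}(p),\pi^\flat_{U_j}(q))$ is bounded by some $h'(r)$. The case $\gamma_{j-2}$ is entirely symmetric, using $U_{j-2}<U_j$ to see that $\pi_{U_j}\gamma|_{[0,a_j]}$ is $10D$--coarsely constant near $\rho^{U_{j-2}}_{U_j}$.

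For the second set, read as $\psi_i\alpha_j\cap\N_r(\psi_i\gamma_{j\pm1})$, the same strategy applies with $V=U_{j+1}$ (respectively $V=U_{j-1}$). The point is that $\alpha_j=\gamma|_{[b_{j-1},a_{j+1}]}$ lies entirely before $a_{j+1}$ (respectively after $b_{j-1}$), so $\pi_{U_{j+1}}\alpha_j$ is $10D$--coarsely constant; combining with Lemma~\ref{bbfnearpi} again gives $\pi^\flat_{U_{j+1}}(\psi_i\alpha_j)$ of bounded diameter, and the remainder of the argument is unchanged. I expect the main technical hurdle to be bookkeeping the linear dependence on $r$ and $K$ through the coarse-Lipschitz constants for $\pi^\flat_V$ and through its agreement-up-to-constant with closest point projection, rather than any conceptually new step; the ingredients are entirely furnished by the projection toolkit of \cite{bbf} together with the coarse constancy of $\pi_V\gamma$ past the subintervals on which $V$ is relevant.
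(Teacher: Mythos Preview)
Your argument is correct. The core mechanism is the same as the paper's---use that $\psi_i\gamma_j\subset\N_{6K}(\C U_j)$ and that $\pi^\flat_V$ coarsely agrees with closest-point projection---but you organise both cases uniformly around the hierarchy-path fact that $\pi_V\gamma$ is $10D$--coarsely constant outside $[a_V,b_V]$, combined with Lemma~\ref{bbfnearpi}, to bound $\pi^\flat_V$ of the ``other'' set directly. The paper instead handles the first intersection purely via the BBF statement that closest-point projection of $\C U$ to $\C V$ lands in a bounded neighbourhood of $\rho^U_V$ (so $\N_r(\C U)\cap\N_r(\C V)$ is bounded, with no reference to $\gamma$), and for the second intersection it invokes Lemma~\ref{alphafellow} to know $\psi_i\alpha_j$ is a quasigeodesic whose two endpoints both project near $\rho^{j-1}_{j+1}$, whence the whole projection is a coarse point. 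Your route has the advantage of not needing Lemma~\ref{alphafellow} for this step and of treating both intersections by the same idea; the paper's first bound is slightly more robust in that it gives a diameter bound on $\N_r(\C U_j)\cap\N_r(\C U_{j+2})$ independent of the particular path. One tiny imprecision: $\alpha_j$ ends \emph{at} $a_{j+1}$ rather than strictly before it, but since $\pi_{U_{j+1}}\gamma|_{[0,a_{j+1}]}$ is $10D$--coarsely constant this does not affect your conclusion.
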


\begin{proof}
According to \cite[Theorem~A]{bbf}, the subset $\C U$ of $\C\mfs_i$ is totally geodesic, hence quasiconvex.  Thus the usual hyperbolic coarse closest-point projection to $\C U$ is a well-defined coarsely lipschitz coarse map. Furthermore, \cite[Cor.~4.10]{bbf} tells us that if $U\neq V$, then the closest-point projection of $\C U$ onto $\C V$ in $\C\mfs_i$ is in a uniformly bounded neighbourhood of the set $\rho^U_V$. It follows that $\N_r(\C U)\cap\N_r(\C V)$ has diameter bounded in $r$, and by Lemma~\ref{closeonbigguys} this is enough for the first intersection.

By Lemma~\ref{closeonbigguys} and \cite[Cor.~4.10]{bbf}, it follows that the closest point in $\C U_{j+1}$ to $\psi_i(y_{j-1})$ is coarsely equal to $\rho^{j-1}_{j+1}$. Together with the definition of $x_{j+1}$, those two results also show that $\rho^{j-1}_{j+1}$ is coarsely the closest point in $\C U_{j+1}$ to $\psi_i(x_{j+1})$. Thus the closest point projection of $\psi_i\alpha_j$ to $\C U_{j+1}$ is a coarse point, as $\psi_i\alpha_j$ is a quasigeodesic from $\psi_i(y_{j-1})$ to $\psi_i(x_{j+1})$ (Lemma~\ref{alphafellow}), and similarly for $\C U_{j-1}$. The bound for the second intersection follows.
\end{proof}

We are now in a position to prove Proposition~\ref{hptoqg}, which in turn proves Proposition~\ref{psiqm}, and thus completes the proof of Theorem~\ref{qietoproduct}.

\begin{proof}[Proof of Proposition~\ref{hptoqg}]
$\gamma$ is a $D$--quasigeodesic, so by Proposition~\ref{psicoarselipschitz} and the fact that coordinate maps are Lipschitz, the path $\psi_i\gamma$ is a Lipschitz path.

In light of Lemmas~\ref{closeonbigguys}, \ref{alphafellow}, and~\ref{smalloverlaps}, the conditions of Lemma~\ref{hwlocalgeodesic} are met by $\psi_i\gamma=(\psi_i\alpha_1)(\psi_i\gamma_2)\dots(\psi_i\alpha_{n+1})$, with all parameters in terms of $E$ and $K$ only. There thus exists a constant $L_0=L_0(E,K)$ such that if every $\psi_i\gamma_j$ has length at least $L_0$, then $\psi_i\gamma$ is a quasigeodesic, with constants depending only on $E$ and $K$.

Now fix $M=L_0+20K$. The length of $\psi_i\gamma_j$ is at least the distance between its endpoints, $\psi_i(x_j)$ and $\psi_i(y_j)$. By construction and Lemma~\ref{closeonbigguys}, these are, respectively, $10D+6K$--close to $\pi_j(1)$ and $\pi_j(g)$. We have bounded below the length of $\psi_i\gamma_j$ by $\dist_U(1,g)-20D-12K\geq L_0+20K-10D-12K>L_0$, which completes the proof.
\end{proof}

\subsection{The quasitree case} 

Here we work in the setting of Theorem~\ref{qietoproduct}, but with the additional assumption that the domains $\C U$ are uniformly quasiisometric to trees. This case will be our main focus in the remainder of the paper.

\begin{theorem} \label{qitocubecomplex}
Let $(G,\s)$ be an HHG with a BBF colouring $\s=\bigsqcup_{i=1}^\chi\s_i$. If, for all $U\in\s$, the hyperbolic space $\C U$ is $(\lambda,\lambda)$--quasiisometric to a tree, then there is a quasimedian quasiisometry $\Phi$ from $G$ to a finite-dimensional CAT(0) cube complex $\mfc$, with constants depending only on $E$, $K$, $\chi$, and $\lambda$.
\end{theorem}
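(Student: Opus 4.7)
The plan is to combine Theorem~\ref{qietoproduct} with Proposition~\ref{promotingqietoqi}. Theorem~\ref{qietoproduct} already furnishes a quasimedian quasiisometric embedding $\Psi\colon G\to\prod_{i=1}^\chi\C\mfs_i$, with constants depending only on $E$, $K$, and $\chi$. So the task is to post-compose $\Psi$ with a quasimedian quasiisometry onto a finite-dimensional CAT(0) cube complex, and then apply Proposition~\ref{promotingqietoqi} to upgrade the composite to a genuine quasimedian quasiisometry.

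The first step is to replace each factor $\C\mfs_i$ by an honest tree. The hypothesis that each $\C U$ is $(\lambda,\lambda)$-quasiisometric to a tree, combined with~\cite[Thm~4.14]{bbf}, tells us that $\C\mfs_i$ itself is $(\mu,\mu)$-quasiisometric to a tree $T_i$ with $\mu$ depending only on $K$ and $\lambda$. Fix such a quasiisometry $q_i\colon\C\mfs_i\to T_i$. Both $\C\mfs_i$ (hyperbolic with constants in $E,K$) and $T_i$ (trivially $0$-hyperbolic) are hyperbolic coarse median spaces, and in any hyperbolic coarse median space the coarse median of a triple is characterised up to bounded error as any coarse centre of a geodesic triangle on the triple. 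A quasiisometry sends geodesic triangles to coarse geodesic triangles and centres to coarse centres, so $q_i$ is automatically quasimedian with constants depending only on $E$, $K$, and $\lambda$. Since the coarse median on a finite product of coarse median spaces is coordinatewise, and quasimedianness is preserved under both composition and finite products of maps, the composite
\[
\Phi'=(q_1,\dots,q_\chi)\circ\Psi\colon G\to\prod_{i=1}^\chi T_i
\]
is a quasimedian quasiisometric embedding with the desired constants.

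Now $\prod_{i=1}^\chi T_i$, after subdividing edges to unit length, is a CAT(0) cube complex of dimension $\chi$. The group $G$ with a word metric is a geodesic coarse median space by~\cite[Thm~7.3]{HHS2}, hence in particular a coarsely connected coarse median space, so Proposition~\ref{promotingqietoqi} applies to $\Phi'$. Its proof shows that a bounded perturbation of $\Phi'$ is a quasiisometry from $G$ to a CAT(0) subcomplex $\mfc$ of $\prod T_i$, of dimension at most $\chi$. Because $\Phi'$ was quasimedian and we have perturbed only by a bounded amount, the resulting quasiisometry $\Phi\colon G\to\mfc$ is still quasimedian, with all constants controlled by $E$, $K$, $\chi$, and $\lambda$.

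The main thing to pin down carefully is the quasimedian property of $q_i$. This reduces to the general principle that coarse medians in hyperbolic spaces are unique up to bounded error as coarse triangle centres, so any quasiisometry between hyperbolic coarse median spaces is quasimedian with constants depending on the hyperbolicity, quasiisometry, and coarse median constants. Modulo this routine verification, no machinery beyond what has already appeared in the paper is needed.
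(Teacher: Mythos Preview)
Your proposal is correct and follows essentially the same route as the paper: apply Theorem~\ref{qietoproduct}, use \cite[Thm~4.14]{bbf} to replace each $\C\mfs_i$ by a tree, and then invoke Proposition~\ref{promotingqietoqi} on the resulting embedding in a finite product of trees. You spell out two points the paper leaves implicit---why the quasiisometries $q_i$ to trees are automatically quasimedian (via coarse centres in hyperbolic spaces), and why the final map out of Proposition~\ref{promotingqietoqi} remains quasimedian after the bounded perturbation---but these are routine elaborations rather than a different argument.
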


\begin{proof}
By Theorem~\ref{qietoproduct}, $G$ admits a quasimedian quasiisometric embedding in the product $\prod_{i=1}^\chi\C\s_i$. Since every $\C U$ is a quasitree, \cite[Thm~4.14]{bbf} tells us that the $\C\mfs_i$ are quasitrees. Thus $G$ admits a (nonequivariant) quasimedian quasiisometrc embedding in a finite product of trees, which is a finite-dimensional CAT(0) cube complex. Applying Proposition~\ref{promotingqietoqi} completes the proof.
\end{proof}

To construct $\mfc$ we first applied quasiisometries $\mfq_i$ to the $\C\mfs_i$ to get trees $T\mfs_i$. Let $\pi_i$ be the coordinate map $\mfc\to T\mfs_i$, so that $\phi_i=\pi_i\Phi=\mfq_i\psi_i$. 
%Here, $\pi_i$ is the composition of a quasiinverse to that map with the natural projection. In particular, $\pi_i\Phi$ coarsely agrees with $\psi_i$.
%\[\begin{tikzcd}
%G \arrow[r, shift left, "\Phi"] \arrow[dr, bend left=30, "\psi_i"'] \arrow[d, "\pi_U"]
%    &\mfc \arrow[d, "\pi_i"] \\
%\C U 
%    &\C\mfs_i \arrow[l, "\pi^\flat_U"]
%\end{tikzcd}\]
Let $\bar\Phi$ be a quasimedian quasiinverse to $\Phi$. Our next proposition strengthens the correspondence between $G$ and $\mfc$ by showing that $\Phi$ ``respects convexity''.

\begin{proposition}[Hierarchically quasiconvex $\leftrightarrow$ convex] \label{hqctoconvex}
For every function $k$ there is a constant $\kappa=\kappa(E,K,\chi,k,\lambda)$ with the property that for each $k$--hierarchically quasiconvex subset $\Z\subset G$ there is a convex subcomplex $\Z'\subset\mfc$ such that $\dist_{Haus}(\Phi(\Z),\Z')\leq\kappa$. Conversely, there is a function $k_0=k_0(E,K,\chi,\lambda)$ such that $\bar\Phi(\Z')$ is $k_0$--hierarchically quasiconvex whenever $\Z'\subset\mfc$ is convex.
\end{proposition}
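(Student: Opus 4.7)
The plan is to handle the two directions separately, using in each the quasimedianness of $\Phi$ (Theorem~\ref{qietoproduct}) and the factorisation $\mfc\subset\prod_{i=1}^\chi T\mfs_i$ through the product of trees.

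For the forward direction I would take $\Z'$ to be the combinatorial convex hull of $\Phi(\Z)$ in $\mfc$; the content is then bounding $\dist_{Haus}(\Phi(\Z),\Z')$, and the key step is to show that $\Phi(\Z)$ is coarsely closed under cubical intervals in $\mfc$: for $u,v\in\Phi(\Z)$ and $z\in[u,v]_\mfc$, $z$ lies uniformly close to $\Phi(\Z)$. To prove this, apply $\bar\Phi$ and invoke hierarchical quasiconvexity of $\Z$, which reduces the task to bounding $\dist_U(\bar\Phi(z),\Z)$ uniformly over $U\in\mfs$. For $U\in\mfs_i$, Lemma~\ref{bbfnearpi} gives $\pi_U\bar\Phi(z)\approx\pi^\flat_U\psi_i\bar\Phi(z)$, and $\psi_i\bar\Phi(z)$ coarsely corresponds via $\mfq_i$ to $\pi_i(z)$, which by the median property of coordinate projections lies in the tree interval $[\pi_i(u),\pi_i(v)]_{T\mfs_i}$. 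Passing through $\pi^\flat_U$---which coarsely agrees with the hyperbolic closest-point projection $\C\mfs_i\to\C U$ by \cite[Cor.~4.10]{bbf}---puts the result in the $\C U$-hull of $\pi_U(\tilde u),\pi_U(\tilde v)$ for any $\Phi$-preimages $\tilde u,\tilde v\in\Z$ of $u,v$, and quasiconvexity of $\pi_U(\Z)$ then gives closeness to $\pi_U(\Z)$. Once coarse interval-closure of $\Phi(\Z)$ is established, a standard iteration argument in the finite-dimensional CAT(0) cube complex $\mfc$ bounds $\dist_{Haus}(\Phi(\Z),\Z')\leq\kappa$.

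For the converse, a convex subcomplex $\Z'\subset\mfc$ is literally median-closed, so $\bar\Phi(\Z')$ is coarsely median-closed in $G$ by quasimedianness of $\bar\Phi$. I would verify the two defining conditions of hierarchical quasiconvexity. For quasiconvexity of $\pi_U(\bar\Phi(\Z'))$ in $\C U$ with $U\in\mfs_i$: the coordinate projection $\pi_i(\Z')\subset T\mfs_i$ is connected, hence a subtree, hence quasiconvex; pulling back along $\mfq_i$ gives a quasiconvex subset of $\C\mfs_i$, and applying $\pi^\flat_U$ yields a quasiconvex subset of $\C U$ that coarsely equals $\pi_U(\bar\Phi(\Z'))$ by Lemma~\ref{bbfnearpi}. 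For the projection-closeness property, suppose $y\in G$ satisfies $\dist_U(y,\bar\Phi(\Z'))\leq\kappa$ for every $U$; the BBF distance formula \eqref{bbfdf} then forces $\phi_i(y)$ to be close to the subtree $\pi_i(\Z')$ for every $i$, and combining this factorwise closeness with the convexity of $\Z'$ in $\mfc$ gives that $\Phi(y)$ is close to $\Z'$ in $\mfc$. Transporting through $\bar\Phi$ yields the required bound in $G$.

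The main obstacle is the interval-closure step in the forward direction. Coarse median-closure of $\Phi(\Z)$ follows immediately from $\Phi$ being quasimedian and $\Z$ being coarsely median-closed, but coarse median-closure alone is not enough to pass to a convex subcomplex: already in $\mathbb Z^2$ the median-closed set $\{(0,0),(n,n)\}$ is at distance $n$ from its convex hull $[0,n]^2$. Genuine interval-closure is what makes the convex hull bounded Hausdorff distance from $\Phi(\Z)$, and establishing it threads together the BBF closest-point projection $\pi^\flat_U$, the quasiisometries $\mfq_i$ between $\C\mfs_i$ and $T\mfs_i$, and the quasiconvexity of $\pi_U(\Z)$ in an essential way.
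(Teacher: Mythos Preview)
Your forward direction is close in spirit to the paper's, but the paper makes a cleverer choice of $\Z'$ that sidesteps the iteration you invoke at the end. Rather than taking the cubical convex hull of $\Phi(\Z)$, the paper sets $\Z'=\{z\in\mfc:\pi_i(z)\in\hull_{T\mfs_i}(\phi_i(\Z))\text{ for all }i\}$, which is visibly convex and contains $\Phi(\Z)$. To show any $z\in\Z'$ is close to $\Phi(\Z)$, the paper uses Proposition~\ref{hptoqg} together with \cite[Prop.~5.7]{rusptr} to see that each $\phi_i(\Z)$ is already uniformly close to its tree-hull; this bounds $\dist_{\C\mfs_i}(\psi_i(g),\psi_i(\Z))$ directly, and then $\pi^\flat_U$ and hierarchical quasiconvexity finish as you outline. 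Your two-point interval computation is essentially the right kind of calculation, but the ``standard iteration argument'' you appeal to afterwards is not as standard as you suggest: coarse interval-closure does not obviously bound Hausdorff distance to the convex hull by a function of only $M$ and the dimension (each iteration of the join operator can a priori multiply the constant, and you would need to know the join stabilises in boundedly many steps). You could avoid this entirely by noticing that your computation actually applies to any $z$ with $\pi_i(z)\in\hull_{T\mfs_i}(\phi_i(\Z))$: since tree-hulls are unions of pairwise intervals, for each $i$ separately there exist $\tilde u_i,\tilde v_i\in\Z$ with $\pi_i(z)\in[\phi_i(\tilde u_i),\phi_i(\tilde v_i)]$, and the rest of your argument is factor-by-factor.

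For the converse, the paper gives a three-line proof using the coarse-median characterisation of hierarchical quasiconvexity from \cite[Prop.~5.11]{rusptr}: one need only bound $\dist_G\big(m_G(z_1,z_2,g),\bar\Phi(\Z')\big)$ for $z_1,z_2\in\bar\Phi(\Z')$ and $g\in G$, and this is immediate from convexity of $\Z'$ and quasimedianness of $\bar\Phi$. Your direct verification of the two defining conditions is longer, and the second has a genuine gap. From $\dist_U(y,\bar\Phi(\Z'))\leq\kappa$ for every $U$ you cannot conclude via~\eqref{bbfdf} that $\phi_i(y)$ is close to $\pi_i(\Z')$: the witness $a_U\in\bar\Phi(\Z')$ realising closeness in $\C U$ varies with $U$, whereas the BBF distance formula bounds $\dist_{\C\mfs_i}(\psi_i(y),\psi_i(a))$ only for a \emph{single} $a$, and the number of domains contributing to the sum is not a priori bounded independently of $y$ and $a$. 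The median-closure route avoids this issue completely.
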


\begin{proof}
For the first statement, let $\Z'=\{z\in\mfc:\pi_i(z)\in\hull_{T\mfs_i}(\phi_i(\Z))\text{ for all } i\}$. Clearly $\Z'$ is convex and contains $\Phi(\Z)$. It thus suffices to show that every $z\in\Z'$ is uniformly close to $\Phi(\Z)$. Let $z\in\Z'$. Since $\Phi$ is a quasiisometry, there is some $g\in G$ with $\dist_\mfc(\Phi(g),z)$ uniformly bounded and it is enough to bound $\dist_G(g,\Z)$.

By \cite[Prop.~5.7]{rusptr}, $D$--hierarchy paths with their endpoints in $\Z$ stay uniformly close to $\Z$, so their $\phi_i$--images stay uniformly close to $\phi_i(\Z)$. By Proposition~\ref{hptoqg}, their $\psi_i$--images are quasigeodesics, and so their $\mfq_i\psi_i=\phi_i$--images are as well. Hence $\phi_i(\Z)$ is uniformly close to its hull, and it follows from the construction of $\Z'$ that $\dist_{\C\mfs_i}(\psi_i(g),\psi_i(\Z))$ is uniformly bounded. Now, $\pi^\flat_U$ is coarsely Lipschitz, so by Lemma~\ref{bbfnearpi} we have that
\[
\dist_U(g,\Z)\leq\dist_U(g,\pi^\flat_U\psi_i(g)) +\dist_U(\pi^\flat_U\psi_i(g),\pi^\flat_U\psi_i(\Z)) +\dist_U(\pi^\flat_U\pi_i(\Z),\Z)
\]
is uniformly bounded. Finally, hierarchical quasiconvexity of $\Z$ ensures that $g$ is uniformly close to $\Z$.

For the converse statement, note that by \cite[Prop.~5.11]{rusptr}, it suffices to bound the distance between the median $m=m_G(z_1,z_2,g)$ and $\bar\Phi(\Z')$ as the $z_i=\bar\Phi(z'_i)$ vary over $\bar\Phi(\Z')$ and $g$ varies over $G$. By convexity, $m'=m_\mfc(z'_1,z'_2,\Phi(g))\in\Z'$, and so $\bar\Phi(m')\in\bar\Phi(\Z')$. Since $\bar\Phi$ is quasimedian, $\dist_G(\bar\Phi(m'),m_G(z_1,z_2,\bar\Phi\Phi(g))$ is uniformly bounded, and thus so is $\dist_G(\bar\Phi(m'),m)$ as $G$ is a coarse median space.
\end{proof}

\section{Applications} \label{section:applications}

Here we deduce consequences of the construction and results of the previous section. 

\begin{corollary} \label{asdim}
If $(G,\s)$ is an HHG with a BBF colouring $\s=\bigsqcup_{i=1}^\chi\s_i$, then the asymptotic dimension of $G$ is at most $\chi(1+\max\{\asdim(\C U):U\in\mfs)\}<\infty$.
\end{corollary}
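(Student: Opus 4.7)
The plan is to combine Theorem~\ref{qietoproduct} with two standard facts about asymptotic dimension---monotonicity under quasiisometric embeddings and the product inequality $\asdim(X\times Y)\leq\asdim(X)+\asdim(Y)$---together with a theorem of Bestvina--Bromberg--Fujiwara bounding the asymptotic dimension of their quasitree of metric spaces.

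First, I would apply Theorem~\ref{qietoproduct} to obtain a quasiisometric embedding $\Psi\colon G\to\prod_{i=1}^\chi\C\mfs_i$. Monotonicity and the product inequality then immediately yield
\[
\asdim(G)\leq\asdim\Bigl(\prod_{i=1}^\chi\C\mfs_i\Bigr)\leq\sum_{i=1}^\chi\asdim(\C\mfs_i).
\]

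The key remaining input is an upper bound on the asymptotic dimension of each BBF quasitree of metric spaces $\C\mfs_i$. I would invoke the relevant asymptotic-dimension theorem from \cite{bbf}, which says that provided the components are uniformly hyperbolic and satisfy the projection axioms---both of which hold in our setting by Lemma~\ref{projectionaxioms}---one has
\[
\asdim(\C\mfs_i)\leq 1+\max\{\asdim(\C U):U\in\mfs_i\}.
\]
Cofiniteness of the $G$--action on $\mfs$, combined with equivariance, means that only finitely many isometry types of $\C U$ occur, so the maximum is over a finite set of values and in particular is finite.

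Combining the two displayed inequalities would give
\[
\asdim(G)\leq\sum_{i=1}^\chi\bigl(1+\max\{\asdim(\C U):U\in\mfs_i\}\bigr)\leq\chi\bigl(1+\max\{\asdim(\C U):U\in\mfs\}\bigr),
\]
as required. The one slightly non-routine step is locating and correctly citing the BBF bound on the asymptotic dimension of the quasitree of metric spaces; everything else is a direct concatenation of well-known asymptotic-dimension facts with Theorem~\ref{qietoproduct}.
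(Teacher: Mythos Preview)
Your proposal is correct and follows essentially the same route as the paper's own proof: quasiisometrically embed $G$ in $\prod_i\C\mfs_i$ via Theorem~\ref{qietoproduct}, bound each $\asdim(\C\mfs_i)$ using the Bestvina--Bromberg--Fujiwara result (the paper cites \cite[Thm~4.24]{bbf}), and finish with monotonicity and the product inequality. The one point where the paper is more careful is the ``$<\infty$'' claim: cofiniteness only gives finitely many isometry types of $\C U$, not that each type has finite asymptotic dimension, so the paper additionally invokes \cite[Cor.~3.3]{bhsasdim} to ensure each $\asdim(\C U)$ is finite (and hence that the uniform bound required as a hypothesis in \cite[Thm~4.24]{bbf} is actually available).
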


\begin{proof}
Each domain has finite asymptotic dimension \cite[Cor.~3.3]{bhsasdim}. Since $G$ acts cofinitely on $\mfs$, there are only finitely many isometry types of domains. We therefore have $\asdim(\C V)\le \sup\{\asdim(\C U):U\in\mfs\} =n<\infty$ for every $V\in\mfs_i$. According to \cite[Thm~4.24]{bbf}, this shows that $\asdim(\C\mfs_i)\le n+1$ for all $i$, and the result follows from subadditivity of asymptotic dimension under direct products \cite[Thm~2.5]{brodskiydydaklevinmitra:hurewicz}.
\end{proof}

The bound of Corollary~\ref{asdim} is not directly comparable with the one from \cite[Thm~5.2]{bhsasdim}, which is based on data such as the length of a maximal $\pnest$--chain in $\mfs$, or the size of certain collections of pairwise $\bot$--related domains---it has no knowledge of the action of $G$ on $\mfs$.

For the remainder, we shall work with an HHG with a BBF colouring and with the property that all its domains are $(\lambda,\lambda)$--quasiisometric to trees. Note that in this case, Corollary~\ref{asdim} also follows from Theorem~\ref{qitocubecomplex} by \cite[Thm~4.10]{wright}. Our next consequence is a coarse version of the Helly property for hierarchically quasiconvex subsets of the group $G$. It relies on the following lemma. Recall that $\hull_\mfc(A)$ denotes the convex hull in $\mfc$ of a subcomplex $A$.

\begin{lemma} \label{hullsofconvexsubcomplexes}
If $\mfc$ is the 1--skeleton of a CAT(0) cube complex of dimension $d$, and $\Z'$ is the 1--skeleton of a convex subcomplex, then $\hull_{\mfc}(\N_r(\Z'))\subset\N_{dr}(\Z')$ holds for all~$r\in\mathbf{N}$.
\end{lemma}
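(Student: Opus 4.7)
Let $v \in \hull_\mfc(\N_r(\Z'))$ be a vertex; the goal is to show $\dist(v,\Z') \leq dr$. Let $\mathcal{H}_v$ denote the set of hyperplanes of $\mfc$ separating $v$ from $\Z'$, so that $|\mathcal{H}_v| = \dist(v,\Z')$. The first step is to observe that for each $H \in \mathcal{H}_v$, the halfspace of $H$ containing $v$ must meet $\N_r(\Z')$: otherwise the complementary halfspace would contain $\N_r(\Z')$ but not $v$, contradicting $v \in \hull_\mfc(\N_r(\Z'))$. Thus each $H \in \mathcal{H}_v$ admits a \emph{witness} $y \in \N_r(\Z')$ lying on the same side of $H$ as $v$, and any such witness automatically satisfies $\dist(y,\Z') \leq r$.

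Next, I endow $\mathcal{H}_v$ with the partial order $H \preceq H'$ meaning that the $v$-halfspace of $H'$ is contained in the $v$-halfspace of $H$. Two elements of $\mathcal{H}_v$ are incomparable under $\preceq$ precisely when all four quadrants determined by their halfspaces are nonempty (two quadrants are populated by $v$ and by $\Z'$ respectively, and incomparability supplies the other two), i.e.\ precisely when they are transverse. Since $\mfc$ has dimension $d$, any collection of pairwise transverse hyperplanes has size at most $d$, so every antichain in $\mathcal{H}_v$ has size at most $d$. Dilworth's theorem then produces a partition $\mathcal{H}_v = C_1 \sqcup \cdots \sqcup C_k$ with $k \leq d$, where each $C_j$ is a chain of pairwise nested hyperplanes.

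To conclude, I bound each chain's length by $r$: for every $j$, pick a witness $y_j \in \N_r(\Z')$ of the $\preceq$-maximal element of $C_j$; by the nesting, $y_j$ also lies on the $v$-side of every other member of $C_j$, so each hyperplane in $C_j$ separates $y_j$ from $\Z'$, giving $|C_j| \leq \dist(y_j,\Z') \leq r$. Summing, $\dist(v,\Z') = \sum_j |C_j| \leq kr \leq dr$, as required. The main thing requiring care is the identification of incomparability in $\preceq$ with transversality, and the corresponding verification that the antichain bound really does come from the dimension of $\mfc$; the rest is routine bookkeeping with Dilworth's theorem and the witness observation.
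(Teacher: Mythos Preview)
Your proof is correct and takes a genuinely different route from the paper's. The paper argues by induction on $r$: it reduces to the base case $r=1$ via the inclusion $\hull_\mfc(\N_r(\Z')) \subset \hull_\mfc\big(\N_1(\hull_\mfc(\N_{r-1}(\Z')))\big)$, and then proves the $r=1$ case by showing that any two hyperplanes separating a point of the hull from $\Z'$ must cross (hence there are at most $d$ of them). Your argument handles all $r$ at once: you observe the same bound on antichains in the nesting poset on $\mathcal{H}_v$, but rather than iterating you invoke Dilworth's theorem to decompose $\mathcal{H}_v$ into at most $d$ chains and then bound each chain's length by $r$ using a witness in $\N_r(\Z')$ for its $\preceq$--maximal element. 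Your approach is more direct and avoids the inductive bookkeeping, at the modest cost of importing Dilworth; the paper's is more elementary. Both ultimately rest on the same geometric input, namely that non-crossing hyperplanes separating a point from a convex set are nested, so the dimension bound on pairwise-crossing families controls the antichains.
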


\begin{proof}
We proceed by induction. For the inductive step, note that 
\[
\hull_\mfc(\N_r(\Z')) \subset\hull_\mfc\Big(\N_1\big(\hull(\N_{r-1}(\Z'))\big)\Big) 
        \subset\N_d\big(\N_{d(r-1)}(\Z')\big)
\]
so it suffices to prove the lemma in the case $r=1$.

Let $z\in\hull(\N_1(\Z'))$, so any hyperplane separating $z$ from $\Z'$ meets $\N_1(\Z')$. If two such hyperplanes do not cross then each lies in a halfspace corresponding to the other, so one of them meets $\Z'$, and hence cannot separate $z$ from $\Z'$: a contradiction. Thus any hyperplanes separating $z$ from $\Z$ must pairwise cross, so there are at most $d$ such hyperplanes. In particular, $\dist_\mfc(z,\Z')\leq d$, as $\Z'$ is convex.
\end{proof}

\begin{theorem}[Coarse Helly property] \label{hellyproperty}
Let $(G,\s)$ be an HHG with a BBF colouring and with the property that all its domains are $(\lambda,\lambda)$--quasiisometric to trees. Suppose that $\{\Z_i\}$ is a finite collection of pairwise $R$--close $k$--hierarchically quasiconvex subsets of $G$. Then there is a constant $r=r(E,K,\chi,k,\lambda,R)$ such that for some $g\in G$ we have $\dist_G(g,\Z_i)\leq r$ for all $i$.
\end{theorem}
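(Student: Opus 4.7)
The plan is to transfer the problem through the quasimedian quasiisometry $\Phi\colon G\to\mfc$ provided by Theorem~\ref{qitocubecomplex}, invoke the (known) Helly property for convex subcomplexes of a finite-dimensional CAT(0) cube complex, and pull the common point back to $G$ via a quasimedian quasiinverse $\bar\Phi$.

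First I would apply $\Phi$ to each $\Z_i$. Since $\Phi$ is a quasiisometry with constants depending only on $E,K,\chi,\lambda$, the images $\Phi(\Z_i)$ are pairwise $R_1$--close for some $R_1=R_1(E,K,\chi,\lambda,R)$. By Proposition~\ref{hqctoconvex}, each $\Phi(\Z_i)$ is Hausdorff $\kappa$--close to a convex subcomplex $\Z_i'\subset\mfc$, where $\kappa=\kappa(E,K,\chi,k,\lambda)$. In particular, setting $R_2=R_1+2\kappa$, the convex subcomplexes $\Z_i'$ are pairwise $R_2$--close.

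Next I would thicken in order to force pairwise intersection and then apply Helly. Define $W_i=\hull_\mfc(\N_{R_2}(\Z_i'))$, a convex subcomplex of $\mfc$ containing $\Z_i'$; by Lemma~\ref{hullsofconvexsubcomplexes}, $W_i\subset\N_{dR_2}(\Z_i')$, where $d$ is the (finite) dimension of $\mfc$. Because $\dist_\mfc(\Z_i',\Z_j')\leq R_2$, any point realising this distance (say the corresponding nearest point of $\Z_j'$) lies in $\Z_j'\cap\N_{R_2}(\Z_i')\subset W_i\cap W_j$, so the $W_i$ pairwise intersect. The Helly property for finite collections of convex subcomplexes of a finite-dimensional CAT(0) cube complex (equivalently, convex subsets of the underlying discrete median algebra, cf.\ Remark~\ref{mediantocat0}) then yields a common point $w\in\bigcap_iW_i$.

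Finally, set $g=\bar\Phi(w)$. For each $i$ we have $\dist_\mfc(w,\Z_i')\leq dR_2$, hence $\dist_\mfc(w,\Phi(\Z_i))\leq dR_2+\kappa$; applying $\bar\Phi$, which is coarsely Lipschitz with constants depending only on $E,K,\chi,\lambda$, gives $\dist_G(g,\Z_i)\leq r$ for a uniform $r=r(E,K,\chi,k,\lambda,R)$. The main step to be careful about is the invocation of Helly for convex subcomplexes of a finite-dimensional CAT(0) cube complex; this is standard (for a finite family of pairwise-intersecting convex subcomplexes, one orients each defining hyperplane toward the intersecting pairs and verifies coherence using finite-dimensionality), but worth citing explicitly. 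Everything else is bookkeeping with quasiisometry constants and the Hausdorff estimates already at our disposal.
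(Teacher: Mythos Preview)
Your proposal is correct and follows essentially the same route as the paper: push the $\Z_i$ through $\Phi$, replace by convex subcomplexes via Proposition~\ref{hqctoconvex}, thicken using Lemma~\ref{hullsofconvexsubcomplexes} to obtain pairwise-intersecting convex subcomplexes, apply the Helly property for CAT(0) cube complexes, and pull the common point back. The paper cites \cite[Cor.~3.6]{hruskawisebounded} for the cubical Helly step, which is exactly the citation you flagged as needed.
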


\begin{proof}
Let $\Phi$ be the quasimedian quasiisometry from $G$ to a finite-dimensional CAT(0) cube complex $\mfc$ provided by Theorem~\ref{qitocubecomplex}. Since the $\Z_i$ are pairwise $R$--close, their $\Phi$--images are pairwise close in terms of $E$, $K$, $\lambda$, and $R$. By Proposition~\ref{hqctoconvex}, each $\Phi(\Z_i)$ is close (in terms of $E$, $K$, $k$, $\lambda$, and $R$) to a convex subcomplex $\Z'_i\subset\mfc$, so the $\Z'_i$ are pairwise close. By Lemma~\ref{hullsofconvexsubcomplexes}, the $\Z'_i$ are uniformly close to a collection of pairwise intersecting convex subcomplexes of $\mfc$, whose total intersection is nonempty by the Helly property for CAT(0) cube complexes \cite[Cor.~3.6]{hruskawisebounded}. One can take $g$ to be the image of any point in this total intersection under any quasiinverse to $\Phi$ that has constants defined in terms of $E$, $K$, and $\lambda$.
\end{proof}

\begin{remark} \label{remark:helly}
The result of Theorem~\ref{hellyproperty} might seem surprising: until recent work of Chepoi-Dragan-Vax\`es~\cite{cdv}, it was not widely known to hold in such a strong way in hyperbolic spaces. Indeed, in that setting, given $n$ quasiconvex subsets that are pairwise close, the na\"ive observation is that there is some point that is $r$--close to each of them, with $r$ depending on $n$.  (The result in~\cite{cdv} removes the dependence on $n$.) 

Even without appealing to~\cite{cdv}, the situation is better for hyperbolic \emph{groups}. For one thing, a strong ``coarse Helly property'' holds for cosets of quasiconvex subgroups, namely bounded packing \cite{gmrs}. Moreover, one can actually remove dependence on the number of quasiconvex subsets as follows.

It is a theorem of Haglund--Wise that for every hyperbolic group $G$ there is a quasiisometry $f$ to a uniformly locally finite CAT(0) cube complex $C$ \cite[Thm~1.8]{HaglundWise:special_combination} (this result uses work of Bonk--Schramm \cite{bonkschramm}). Moreover, $f$ is quasimedian: forgetting the cubical structure of $C$ leaves a hyperbolic space, so, by considering quasigeodesic triangles, $f$ is quasimedian at the level of hyperbolic spaces, and hence is actually quasimedian as any quasigeodesic triangle in $C$ is close to any 1--skeleton geodesic triangle, one of which is a tripod. Now let $\{A_i\}$ be a finite collection of pairwise $R$--close quasiconvex subsets of $G$, so that the $R$--neighbourhoods $\N_R(A_i)$, which are quasiconvex, pairwise intersect. The sets $B_i=f(\N_R(A_i))$ thus pairwise intersect, and, by \cite[Thm~2.28]{haglund}, are uniformly Hausdorff-close to their convex hulls $B_i'\subset C$. By the Helly property for convex subcomplexes of CAT(0) cube complexes, there is some point $p$ contained in every $B_i'$, and hence any point that $f$ maps close to $p$ must lie close to all $A_i$.

Note that Theorem~\ref{hellyproperty} does not provide a new proof of this fact, as the trivial HHG structure on a general hyperbolic group is not an HHG structure whose $\C U$ spaces are quasitrees.
\end{remark}

\begin{corollary} \label{boundedpacking}
Let $(G,\s)$ be an HHG with a BBF colouring and with the property that all its domains are $(\lambda,\lambda)$--quasiisometric to trees. Every hierarchically quasiconvex subgroup $H<G$ has bounded packing.
\end{corollary}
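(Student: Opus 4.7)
The plan is to deduce this essentially immediately from the coarse Helly property (Theorem~\ref{hellyproperty}) combined with local finiteness of $G$ in the word metric. Suppose $g_1H,\ldots,g_NH$ are pairwise $R$--close cosets of the $k$--hierarchically quasiconvex subgroup $H$. First I would observe that each translate $g_iH$ is again $k$--hierarchically quasiconvex with \emph{the same} function $k$: this uses only that $G$ acts on itself by isometries and that the HHG structure is $G$--equivariant, so that $\pi_U(g_iH)=g_i\pi_{g_i^{-1}U}(H)$ and the quasiconvexity and retraction conditions transfer directly from $H$ to $g_iH$.

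Next I would apply Theorem~\ref{hellyproperty} to the collection $\{g_iH\}_{i=1}^N$, which yields a constant $r=r(E,K,\chi,k,\lambda,R)$ and an element $g\in G$ with $\dist_G(g,g_iH)\leq r$ for every $i$. For each $i$ choose $h_i\in H$ with $\dist_G(g,g_ih_i)\leq r$, so that $g^{-1}g_ih_i$ lies in the word-metric ball $B_G(1,r)$ of radius $r$ around the identity. Since the cosets $g_iH$ are pairwise distinct, the elements $g_ih_i$ are pairwise distinct, hence so are the elements $g^{-1}g_ih_i$. Therefore
\[
N \;\leq\; |B_G(1,r)|,
\]
and the right-hand side is finite and depends only on $r$ (equivalently, only on $E$, $K$, $\chi$, $k$, $\lambda$, $R$, and the fixed finite generating set of $G$). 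This is exactly the bounded packing condition.

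I do not expect a real obstacle here: the only thing to check carefully is the equivariance of hierarchical quasiconvexity in the first step, and the translation from ``some point $r$--close to every $g_iH$'' to a cardinality bound via the finiteness of balls in the Cayley graph. Both are routine given the statements already in the paper, so the proof should be just a few lines.
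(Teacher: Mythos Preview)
Your proposal is correct and follows essentially the same route as the paper: apply the coarse Helly property (Theorem~\ref{hellyproperty}) to the pairwise $R$--close cosets, obtain a point $g$ that is $r$--close to all of them, and then bound the number of cosets by the cardinality of the finite ball $B_G(1,r)$ using disjointness of cosets. The paper's proof is more terse and leaves the equivariance of hierarchical quasiconvexity implicit, but the argument is the same.
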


\begin{proof}
By Theorem~\ref{hellyproperty}, any collection of pairwise $R$--close cosets of $H$ are $r$--close to a single point. In particular, these cosets all intersect a ball of radius $r$, which is finite since $G$ is finitely generated. Since cosets are disjoint, we have bounded the number of pairwise $R$--close cosets by the cardinality of the $r$--ball.
\end{proof}

Our final result is analogous to similar statements for hyperbolic groups \cite[Thm.~3.1]{sageev}, \cite[Thm~7.1]{hruskawisefiniteness}, \cite[Thm~5]{nibloreeves}. 

\begin{corollary} 
Let $(G,\s)$ be an HHG with a BBF colouring and with the property that all its domains are $(\lambda,\lambda)$--quasiisometric to trees. If $G$ has a $k$--hierarchically quasiconvex codimension--1 subgroup $H$, then $G$ acts cocompactly on a finite-dimensional CAT(0) cube complex with hyperplane stabilisers commensurable with $H$.
\end{corollary}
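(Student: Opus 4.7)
The plan is to run the classical Sageev construction (cf.~\cite{sageev,hruskawisefiniteness,nibloreeves}) on the codimension--1 subgroup $H$, with the tools developed in this section substituting for the analogous results available for quasiconvex subgroups of hyperbolic groups. Concretely, fix $r$ and a deep component $K$ of $\cay(G)\smallsetminus\N_r(H)$ as in the definition of codimension--1, and form the $G$--invariant collection of walls $\W=\{gK\sqcup(G\smallsetminus gK):g\in G\}$, as described at the end of the Background section. The $G$--stabiliser of a wall is a conjugate of $\stab_G(K)$, which is commensurable with $H$ by construction, so the hyperplane stabilisers in the eventual dual complex will be commensurable with $H$, as required.

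To see that $(G,\W)$ is a wallspace, parametrise walls (up to $\stab_G(K)$) by cosets of $H$ and observe that if $gK$ separates $x$ from $y$ then $gH$ must meet the $r$--neighbourhood of any geodesic from $x$ to $y$. The separating cosets thus pairwise meet a common set whose diameter is linear in $\dist_G(x,y)$, so bounded packing of $H$ (Corollary~\ref{boundedpacking}) yields a finite bound. Dualise to obtain a CAT(0) cube complex $\X$ on which $G$ acts by cellular isometries.

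The finite-dimensionality of $\X$ is established by the same mechanism: its dimension equals the supremum of sizes of pairwise crossing subfamilies of $\W$, but a pairwise crossing family forces the corresponding cosets of $H$ to be pairwise close, because all four quadrants of each pair of walls are non-empty. Bounded packing again furnishes the needed bound.

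The main obstacle is cocompactness of $G\curvearrowright\X$; here I would follow the Hruska--Wise strategy, using the coarse Helly property (Theorem~\ref{hellyproperty}) in place of its hyperbolic-group counterpart. Each vertex $v\in\X$ is a coherent orientation of $\W$, and the walls on which $v$ disagrees with a given principal orientation $p_x$ ($x\in G$) have corresponding cosets of $H$ that pairwise come close to a bounded region. Theorem~\ref{hellyproperty} then produces a single $g\in G$ close to all of them, and Corollary~\ref{boundedpacking} bounds the number of such walls independently of $v$. Thus only finitely many ``flip patterns'' arise modulo the $G$--action, giving finite orbits of vertices and hence of cubes.
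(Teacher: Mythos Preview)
Your overall plan---run Sageev's construction on the wallspace coming from $H$, then use bounded packing to control the dual complex---is exactly the paper's approach, and your arguments for the wallspace axiom, for commensurability of hyperplane stabilisers with $H$, and for finite-dimensionality are fine.

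The cocompactness step, however, has a genuine gap. You assert that the walls on which an arbitrary coherent orientation $v$ disagrees with a principal orientation $p_x$ have cosets that ``pairwise come close to a bounded region,'' and then feed this into Theorem~\ref{hellyproperty}. But the walls separating $v$ from $p_x$ need not pairwise cross: they can be nested, in which case the corresponding cosets of $H$ need not be pairwise close, and there is no evident bounded region near which they all pass. (If $v$ were itself principal, say $v=p_y$, then every separating coset meets $\N_r([x,y])$ and your argument goes through; for a non-principal $v$ there is no such $y$.) Without pairwise closeness you cannot invoke Theorem~\ref{hellyproperty} or Corollary~\ref{boundedpacking}, and the bound on ``flip patterns'' does not follow.

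The paper's route is both the fix and a simplification. Cocompactness of $G\curvearrowright\X$ is equivalent to there being only finitely many $G$--orbits of collections of \emph{pairwise crossing} walls, since cubes of $\X$ correspond exactly to such collections. For pairwise crossing walls $\{g_jW\}$, Sageev's lemma \cite[Lem.~3.2]{sageev} forces the neighbourhoods $g_j\N_r(H)$ to pairwise intersect, so the cosets $g_jH$ are pairwise $2r$--close. Now Corollary~\ref{boundedpacking} (whose proof via Theorem~\ref{hellyproperty} shows that any such family can be translated into a fixed ball) bounds both the size of the family and the number of $G$--orbits. So all the ingredients you assembled suffice; you just need to apply them to cubes rather than to vertices.
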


\begin{proof}
Fix $R$ large enough that $N=\N_R(H)$ is connected (possible by hierarchical quasiconvexity) and $G\smallsetminus N$ has at least two $H$--orbits of deep components. Let $K$ be one such orbit. This gives $G$ a wallspace structure with walls $gW=\{gK,G\smallsetminus gK)$, so $G$ acts on the CAT(0) cube complex dual to this wallspace, and the hyperplane stabilisers are commensurable with $H$ by \cite[Rem.~3.11, Def.~2.10]{hruskawisefiniteness}. It suffices to show that there are only finitely many orbits of collections of pairwise crossing walls. 

By \cite[Lem.~3.2]{sageev}, if $N$ and $gN$ are disjoint, then the walls $W$ and $gW$ do not cross. Thus if we have a set of pairwise crossing walls $gW$ then the sets $gN$ pairwise intersect, so the cosets $gH$ are pairwise $2R$--close, and we are done by Corollary~\ref{boundedpacking}.
\end{proof}

\bibliographystyle{alpha}
\bibliography{bibtex}

\end{document}